\newtheorem{thm}{Theorem}[section]
\newtheorem{cor}[thm]{Corollary}
\newtheorem{lem}[thm]{Lemma}
\theoremstyle{definition}
\newtheorem{exam}[thm]{Example}
\newtheorem{rem}[thm]{Remark}
\numberwithin{equation}{section}
\newcommand{\norm}[1]{\left\Vert#1\right\Vert}
\newcommand{\field}[1]{\mathbb{#1}}
\newcommand{\D}{\field{D}}
\begin{document}

\title[Generalized Volterra operators mapping between...]{Generalized Volterra operators mapping between\\ Banach spaces of analytic functions}

\author[Eklund] {T. Eklund}\address{Ted Eklund. Department of Mathematics, \AA bo Akademi University. FI-20500 \AA bo, Finland.
\emph{e}.mail: ted.eklund@abo.fi}
\author[Lindstr{\"o}m] {M. Lindstr\"om}\address{Mikael Lindstr{\"o}m. Department of Mathematics, \AA bo Akademi University. FI-20500 \AA bo, Finland. \emph{e}.mail: mikael.lindstrom@abo.fi} 
\author[Pirasteh] {M. M. Pirasteh}\address{Maryam M. Pirasteh. Department of Mathematics, University of Mohaghegh Ardabili, Ardabil, Iran.  \emph{e}.mail: m.pirasteh@uma.ac.ir, maryam.mpirasteh@gmail.com}
\author[Sanatpour] {A. H. Sanatpour}\address{Amir H. Sanatpour. Department of Mathematics, Kharazmi University, Tehran, Iran.  \emph{e}.mail: a$\_$sanatpour@khu.ac.ir, a.sanatpour@gmail.com}
\author[Wikman] {N. Wikman}\address{Niklas Wikman. Department of Mathematics, \AA bo Akademi University. FI-20500 \AA bo, Finland. \emph{e}.mail: niklas.wikman@abo.fi}

\subjclass[2010]{Primary 47B38, Secondary 46B50.}
\keywords{Volterra operator, weighted composition operator, weighted Banach spaces of analytic functions, Bloch-type spaces}

\begin{abstract}
We characterize boundedness and compactness of the classical Volterra operator $T_g \colon H_{v_{\alpha}}^{\infty} \to H^{\infty}$  induced by a univalent function $g$ for standard weights $v_{\alpha}$ with $0 \leq \alpha < 1$, partly answering an open problem posed by A. Anderson, M. Jovovic and W. Smith. We also study boundedness, compactness and weak compactness of the generalized Volterra operator $T_g^{\varphi}$ mapping between Banach spaces of analytic functions on the unit disc satisfying certain general conditions.
\end{abstract}

\maketitle
\section{\bf{Introduction}}
The space of analytic functions on the open unit disc  $\mathbb{D}$ in the complex plane $\mathbb{C}$ is denoted by $H(\mathbb D)$. Every analytic selfmap $\varphi \colon \mathbb D \to \mathbb D$ induces a \emph{composition operator} $C_{\varphi}f = f \circ \varphi$ on $H(\mathbb{D})$. If furthermore $u \in H(\mathbb{D})$ then one can define a \emph{weighted composition operator} $uC_{\varphi}(f) = u\cdot(f \circ \varphi)$. For general information of composition operators on classical spaces of analytic functions the reader is referred to the excellent monographs by Cowen and MacCluer \cite{18} and Shapiro \cite{19}. The main object of study in this paper is the \emph{generalized Volterra operator} $T_g^\varphi$, defined for a fixed function $g \in H(\mathbb{D})$ and selfmap $\varphi \colon \mathbb{D} \to \mathbb{D}$ as  
\begin{equation*}
T_g^{\varphi}(f)(z) = \int_{0}^{\varphi(z)}{f(\xi)g^{\prime}(\xi)d\xi},  \ \ z\in \mathbb{D}, f \in H(\mathbb{D}).
\end{equation*}
In the special case when $\varphi$ is the identity map $\varphi(z)=z$ we get the \emph{classical Volterra operator}
\begin{equation*}
T_g(f)(z) = \int_{0}^{z}{f(\xi)g^{\prime}(\xi)d\xi},  \ \ z\in \mathbb{D}, f \in H(\mathbb{D}),
\end{equation*}
which has been extensively studied on various spaces of analytic functions during the past decades, starting from the papers \cite{28}, \cite{26}, \cite{27} and  \cite{25}  by  Pommerenke, Aleman, Cima and  Siskakis.

The present paper is inspired by two recent works on boundedness and compactness properties of the Volterra operator $T_g$ mapping into the space $H^{\infty}$ of bounded analytic functions on the unit disc equipped with the supremum norm $\|\cdot\|_{\infty}$. Namely, Smith, Stolyarov and Volberg \cite{13} obtained a very nice necessary and sufficient condition  for $T_g$ to be bounded on $H^\infty$ when $g$ is univalent. The main purpose of this paper is to demonstrate that  similar conditions  characterize  boundedness and compactness of $T_g \colon H^\infty_{v_\alpha} \to H^\infty$ when  $0 \le \alpha <1$ and $g$ is univalent. This is done in section 2  which contains the main result of the paper (Theorem 2.3). In the other work,  Contreras, Pel{\'a}ez, Pommerenke and R{\"a}tty{\"a} \cite{2} studied  boundedness, compactness and weak compactness of $T_g \colon \mathcal{X} \to H^{\infty}$ acting on a Banach space $\mathcal{X} \subset H(\mathbb D)$. In section 3 we carry out a similar study of the generalized Volterra operator  $T_g^\varphi$ mapping  between Banach spaces of analytic functions on the unit disc satisfying certain general conditions. Among other things, we estimate the norm and essential norm  of $T_g^\varphi$  mapping from $\mathcal{X}$ into the spaces $H_v^{\infty}$ and $\mathcal B_v^{\infty}$ (see definitions below), and we also show that weak compactness and compactness coincide when $T_g^\varphi$ acts on $H_v^{\infty}$ or $\mathcal B_v^{\infty}$, for very general target spaces.

To introduce this general framework, let $\mathcal{X}$ be a Banach space of analytic functions on the unit disc $\mathbb D$ and let $\|\cdot\|_{\mathcal{X}}$ denote its norm. For any $z\in \mathbb D$, the evaluation functional $\delta_z\colon \mathcal{X} \to \mathbb{C}$  is defined as $\delta_z(f) = f(z)$ for
$f \in \mathcal{X}$. Throughout this paper we will assume that $\mathcal{X}$ contains the constant functions, and hence all $\delta_z$ are non-zero. We will also consider the following conditions on the space $\mathcal{X}$ (see \cite{12} or \cite{5}):

\begin{itemize}
\item[(I)] The  closed unit ball $B_{\mathcal{X}}$ of $\mathcal{X}$ is compact with respect to  the compact open topology $co$. In particular,  the identity map  $\text{id} \colon (\mathcal{X} , \norm{\cdot}_{\mathcal{X}}) \to (\mathcal{X} ,co)$ is continuous and hence $\delta_z\in \mathcal{X}^*.$
\item[(II)] The evaluation functionals $\delta_z\colon \mathcal{X} \to \mathbb{C}$ satisfy $\lim_{|z|\to 1} \|\delta_z\|_{\mathcal{X} \to\,\mathbb{C}} = \infty$.
\item[(III)] The linear operator $T_r\colon \mathcal{X} \to \mathcal{X}$ mapping $f\mapsto f_r$, where $f_r(z) = f(rz)$, is compact for every $0 < r < 1$.
\item[(IV)] The operators $T_r$ in (III) satisfy $\sup_{0< r<1} \norm{T_r}_{\mathcal{X} \to \mathcal{X}}< \infty$.
\item[(V)] The pointwise multiplication operator $M_u\colon \mathcal{X} \to \mathcal{X}$ satisf{}ies
$\norm{M_u}_{\mathcal{X} \to \mathcal{X}} \lesssim \norm{u}_{\infty}$ for every $u\in H^\infty$, and in particular $H^\infty\subset\mathcal{X}$.
\end{itemize}
We use the notation $A\lesssim B$ to indicate that there is a positive constant $c,$ not depending on properties of $A$ and $B,$ such that $A\le cB.$ We will also write $A\asymp B$ whenever both $A\lesssim B$ and $B\lesssim A$ hold. 

The evaluation map $Q(f) = \widehat{f} \colon \mathcal{X} \to \mathcal{X}^{\ast\ast}$, where $\widehat{f}(\ell) = \ell(f)$ for $f \in \mathcal{X}$ and $\ell \in {\mathcal{X}}^{\ast}$, serves as a natural embedding of any Banach space $\mathcal{X}$ into its second dual. From condition (I) we obtain by using the Dixmier-Ng theorem \cite{6} that the space
\begin{equation*}
^*\!{\mathcal{X}}:= \{\ell \in \mathcal{X}^*: \ell|B_{\mathcal{X}} \text{ is $co$-continuous}\},
\end{equation*}
endowed with the norm induced by the dual space $\mathcal{X}^*$, is a Banach space and that the evaluation map $\Phi_{\mathcal{X}} \colon \mathcal{X} \to (^*\!{\mathcal{X}})^*$, defined as the restriction $\Phi_{\mathcal{X}}(f) = \widehat{f}\,\big|{^*\!{\mathcal{X}}}$, is an onto isometric isomorphism. In particular, $^*\!{\mathcal{X}}$ is a predual of $\mathcal{X}.$  Moreover, it follows from the Hahn-Banach theorem that the linear span of the set $\{\delta_z : z \in\mathbb D\}$ is contained and norm dense in $^*\!{\mathcal{X}}$, see \cite{7} for more details.

Among the spaces that will be considered in this paper are the \emph{weighted Banach spaces of analytic functions}  $H_v^{\infty}$ and $H_v^0$ given by
\begin{align*}
H_v^{\infty}  &= \ \Big\{f\in H(\D)\,:\, \norm{f}_{H^\infty_v}:= \sup_{z\in \D}v(z)|f(z)| < \infty\Big\},\\
H_v^0 \ &= \ \Big\{f\in H_v^{\infty}\,:\,\lim_{|z| \rightarrow 1^-}v(z)|f(z)| = 0\Big\},
\end{align*}
where the \emph{weight} $v \colon \mathbb{D} \rightarrow \mathbb{R}$ is a continuous and strictly positive function such that $\lim_{|z|\to 1} v(z) =0$. The weight $v$ is called \emph{normal} if it is \emph{radial}, in the sense that $v(z)= v(|z|)$ for every $z \in \mathbb{D}$, non-increasing with respect to $|z|$ and satisfies  
\begin{equation*}
\inf_{k \in \mathbb{N}} \limsup_{n \to \infty} \frac{v(1-2^{-n-k})}{v(1-2^{-n})} < 1 \ \ \text{ and } \ \ \sup_{n \in \mathbb{N}} \frac{v(1-2^{-n})}{v(1-2^{-n-1})} < \infty.
\end{equation*}
Lusky \cite{4} has shown  that $H_v^{\infty}\approx \ell^\infty$ and $H_v^0\approx c_0$ for a large class of  weights including the normal weights. By $\mathcal{X} \approx \mathcal{Y}$ we mean that the spaces $\mathcal{X}$ and $\mathcal{Y}$ are isomorphic. The \emph{standard weights} $v_{\alpha}(z) := (1-|z|^2)^{\alpha}$ with $\alpha > 0$ are clearly normal, and for these we sometimes write $H_{\alpha}^{\infty}$ instead of $H_{v_{\alpha}}^{\infty}$ and $H_{\alpha}^0$ instead of $H_{v_{\alpha}}^0$, with similar pattern for other types of spaces. 

The \emph{Bloch-type spaces} $\mathcal B_v^{\infty}$ and $\mathcal B_v^0$  are defined by
\begin{align*}
\mathcal B_v^{\infty}  &= \ \Big\{f\in H(\D)\,:\,  \norm{f}_{\mathcal B_v^{\infty}}:= |f(0)| + \sup_{z\in \D}v(z)|f^{\prime}(z)| < \infty\Big\},\\
\mathcal B_v^0 \ &= \ \Big\{f\in \mathcal B_v^{\infty}\,:\,  \lim_{|z|\rightarrow 1^-}v(z)|f^{\prime}(z)| = 0\Big\},
\end{align*}
and we also denote $\widetilde{\mathcal B}_v^{\infty} := \{f \in \mathcal B_v^{\infty}:  f(0)=0 \}$ and $\widetilde{\mathcal B}_v^0 := \{f \in \mathcal B_v^0:   f(0)=0 \}$. If the weight $v$ is normal then, by a result of Lusky \cite{17} and using the weight $w(z) = (1-|z|)v(z)$, one can identify $H_{v}^{\infty} = \mathcal{B}_w^{\infty}$ and $H_{v}^0 = \mathcal{B}_w^0$.

Furthermore, the \emph{Hardy space} $H^p$ for $1 \leq  p < \infty$ consists of all functions $f$ analytic in the unit disc such that
\begin{equation*}
\|f\|_{H^p}^{p} := \sup_{0 \leq r < 1}\tfrac{1}{2\pi}\!\!\hspace*{0.1mm}\int_{0}^{2\pi}{|f(re^{i\theta})|^p d\theta} < \infty,
\end{equation*}
and the \emph{weighted Bergman spaces} for constants $\alpha > -1$ and $1 \leq p < \infty$ are given by
\begin{equation*}
A^p_\alpha = \left\{f\in H(\D): \norm{f}^p_{A^p_\alpha}  := \int_{\D} |f(z)|^p(1-|z|^2)^\alpha \ dA(z) < \infty\right\},
\end{equation*}
where $dA(z)$ is the normalized area measure on $\mathbb{D}$. For further use, recall the functional norms $\|\delta_z\|_{H^p \to \mathbb{C}} = (1 - |z|^2)^{-\frac{1}{p}}$ and $\|\delta_z\|_{A_\alpha^p \to \mathbb{C}} = (1 - |z|^2)^{\frac{-2-\alpha}{p}}$. Finally, the \emph{disc algebra} $A(\mathbb{D})$ is the space of functions analytic on $\mathbb{D}$ that extend continuously to the boundary $\partial\mathbb{D}$.

Condition (I) holds for all the above mentioned spaces, except for $H_v^0$, $\mathcal{B}_v^0$ and $A(\mathbb{D})$. The spaces $H^p$ and $A_{\alpha}^p$ satisfy all conditions (I)-(V) when $1 \leq p < \infty$ and $\alpha > -1$, and the same is true for $H_{v}^{\infty}$ if the weight $v$ is normal and equivalent to its associated weight $\widetilde{v}(z) = \|\delta_z\|_{H_v^{\infty} \to \mathbb{C}}^{-1}$, see \cite{20}. We refer the reader to \cite{5} for a more thorough discussion on the conditions (I)-(V). 

The \emph{essential norm}  of a bounded linear operator $T\colon \mathcal{X}\to\mathcal{Y}$ is defined to be the distance to the compact operators, that is
	\begin{equation*}
	\|T\|_{e,\mathcal{X}\to\mathcal{Y}}= \inf\left\{\|T-K\|_{\mathcal{X}\to\mathcal{Y}}: K\colon \mathcal{X}\to\mathcal{Y} \ \text{is compact}\right\}.
	\end{equation*}
Notice that  $T\colon \mathcal{X}\to\mathcal{Y}$ is compact if and only if $\|T\|_{e,\mathcal{X}\to\mathcal{Y}}=0$. We will use the following result when characterizing compactness and weak compactness of the generalized Volterra operator.

\begin{lem}\label{2}
\cite[Lemma 3.3]{2} Let $\mathcal{X}\subset H(\mathbb{D})$ be a Banach space satisfying condition \textnormal{(I)} and $\mathcal{Y}\subset H(\mathbb{D})$ be a Banach space such that point evaluation functionals on $\mathcal{Y}$ are bounded. Assume that $T\colon \mathcal{X} \to \mathcal{Y}$ is a co-co-continuous linear operator. Then $T\colon \mathcal{X} \to \mathcal{Y}$ is compact (respectively weakly compact) if and only if $\{Tf_n\}_{n=1}^{\infty}$ converges to zero in the norm (respectively in the weak topology) of $\mathcal{Y}$  for each bounded sequence $\{f_n\}_{n=1}^{\infty}$  in $\mathcal{X}$  such that $f_n \to 0$ uniformly on compact subsets of $\mathbb{D}$.
\end{lem}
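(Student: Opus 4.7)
The plan is to exploit two ingredients: condition (I), which gives sequential compactness of $B_{\mathcal{X}}$ in the compact-open topology (since $co$-convergence on $H(\mathbb{D})$ is metrizable on uniformly bounded families of analytic functions, in particular on the normal family $B_{\mathcal{X}}$), and boundedness of the point evaluations on $\mathcal{Y}$, which guarantees that both norm convergence and weak convergence in $\mathcal{Y}$ entail pointwise convergence on $\mathbb{D}$. Combined, these produce a uniqueness principle: any sequence in $\mathcal{Y}$ that converges both in $co$ and in norm (respectively weakly) must have the same limit in both senses.

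For the forward implication, I would take a bounded sequence $\{f_n\}\subset \mathcal{X}$ with $f_n\to 0$ uniformly on compact subsets of $\mathbb{D}$. Compactness (respectively weak compactness) of $T$ makes $\{Tf_n\}$ relatively norm-compact (respectively, by Eberlein--Smulian, relatively weakly compact) in $\mathcal{Y}$, so every subsequence admits a further subsequence converging in norm (respectively weakly) to some $g\in\mathcal{Y}$. On the other hand, $co$-$co$-continuity of $T$ forces $Tf_n\to 0$ in the compact-open topology, hence pointwise on $\mathbb{D}$. The uniqueness principle above then yields $g=0$, and the standard subsequence-of-subsequence argument finishes the forward direction.

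For the converse, I would verify that $T(B_{\mathcal{X}})$ is relatively norm-compact (respectively weakly compact). Given any $\{h_n\}\subset B_{\mathcal{X}}$, condition (I) together with the metrizability remark supplies a subsequence, still denoted $\{h_n\}$, converging in the compact-open topology to some $h\in B_{\mathcal{X}}\subset\mathcal{X}$. Setting $g_n \coloneq h_n-h$, the sequence $\{g_n\}$ is bounded in $\mathcal{X}$ and tends to $0$ uniformly on compacta, so the hypothesis delivers $Tg_n\to 0$ in norm (respectively weakly), i.e.\ $Th_n\to Th$. Hence every sequence in $T(B_{\mathcal{X}})$ has a norm-convergent (respectively, by Eberlein--Smulian, weakly convergent) subsequence, giving compactness (respectively weak compactness) of $T$.

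The only delicate step I anticipate is the sequential extraction: condition (I) is a topological statement, so one has to invoke metrizability of the $co$-topology on the normal family $B_{\mathcal{X}}$ to upgrade compactness to sequential compactness. Once this is in place, the proof is a bookkeeping exercise around the pointwise-uniqueness observation, and the argument runs in parallel for the norm and weak versions by simply substituting Eberlein--Smulian in the weak case.
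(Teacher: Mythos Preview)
Your argument is correct and is essentially the standard proof of this type of result. Note, however, that the paper does not supply its own proof of this lemma: it is quoted verbatim from \cite[Lemma~3.3]{2} (Contreras, Pel\'aez, Pommerenke and R\"atty\"a) and used as a black box, so there is no in-paper proof to compare against. Your write-up matches the expected argument in that reference: metrizability of the compact-open topology on $H(\mathbb{D})$ upgrades condition~(I) to sequential compactness of $B_{\mathcal{X}}$, boundedness of point evaluations on $\mathcal{Y}$ forces any norm or weak limit to agree with the $co$-limit, and Eberlein--Smulian handles the weak case on both sides. The only place to be slightly more explicit is that in the converse direction the difference sequence $g_n=h_n-h$ is bounded by $2$ rather than by $1$, but this is harmless since the hypothesis is stated for arbitrary bounded sequences.
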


The above lemma can be applied to the generalized Volterra operator $T_g^{\varphi} \colon \mathcal{X} \to \mathcal{Y}$, since it is obvious that $T_g^{\varphi} \colon H(\mathbb{D}) \to H(\mathbb{D})$ always is co-co-continuous. It is also worth mentioning that if $\mathcal{X},\mathcal{Y} \subset H(\mathbb{D})$ are Banach spaces containing the constant functions and $T_g^{\varphi} \colon \mathcal{X} \to \mathcal{Y}$ is bounded, then  $g \circ \varphi = T_{g}1 + g(0) \in \mathcal{Y}$.

\section{\bf{Boundedness and compactness results for $T_g \colon H_{v_{\alpha}}^{\infty} \to H^{\infty}$}}

In this section we characterize boundedness and compactness of the classical Volterra operator $T_g \colon H_{v_{\alpha}}^{\infty} \to H^{\infty}$ induced by a univalent function $g$ for standard weights $v_{\alpha}$ with $0 \leq \alpha < 1$. The study of the case when $\alpha = 0$, that is when $T_g \colon H^{\infty} \to H^{\infty}$, was initiated in the paper \cite{1} where the authors conjectured that the set
\begin{equation*}
T[H^{\infty}] = \left\{g \in H(\mathbb{D}) : T_g \colon H^{\infty} \to H^{\infty}  \textnormal{ is bounded}\right\}
\end{equation*}
would coincide with the space of functions analytic in $\mathbb{D}$ with bounded radial variation
\begin{equation*}
\textnormal{BRV} = \left\{f \in H(\mathbb{D}) : \sup_{0 \leq \theta < 2\pi}\int_{0}^{1}{|f^{\prime}(re^{i\theta})|\,dr} < \infty \right\}.
\end{equation*}
In \cite{13} this conjecture was confirmed when the inducing function $g$ is univalent, that is 
\begin{equation}\label{36}
T[H^{\infty}] \cap \{g \in H(\mathbb{D}) : g \textnormal{ is univalent}\} = \textnormal{BRV}.
\end{equation}
However, the same paper also contains a counterexample to the general conjecture posed in \cite{1}, meaning that $\textnormal{BRV} \subsetneqq T[H^{\infty}]$. In another recent paper \cite[Section 2.2]{2},  Contreras, Pel{\'a}ez, Pommerenke and R{\"a}tty{\"a}  showed as a side result that $T_g \colon H_{v_1}^{\infty} \to H^{\infty}$ is bounded precisely when $g$ is a constant function. Since the size of the spaces 
$H_{v_{\alpha}}^{\infty}$ increases as the power $\alpha$ grows, one concludes that the only Volterra operator $T_g \colon H_{v_{\alpha}}^{\infty} \to H^{\infty}$ that can be bounded when $\alpha \geq 1$ is the zero operator. Hence, we are left to consider the remaining cases $0 \leq \alpha < 1$, and will also restrict our study to univalent symbols $g$. 

In the mentioned paper \cite{1} the authors also discussed the compactness of the Volterra operator $T_g \colon H^{\infty} \to  H^{\infty}$, and suggested the space
\begin{equation}\label{41}
\textnormal{BRV}_0 = \left\{f \in H(\mathbb{D}) :  \lim_{t \to 1^{-}}\sup_{0 \leq \theta < 2\pi}\int_{t}^{1}{|f^{\prime}(re^{i\theta})|\,dr} = 0\right\}
\end{equation}
of functions analytic in the unit disc with derivative uniformly integrable on radii as a natural candidate for the set of such functions $g$. The notation $\textnormal{BRV}_0$ is adopted from \cite[Section 2.4]{2}. As the main result of this paper, we characterize the compactness of $T_g \colon H_{v_{\alpha}}^{\infty} \to H^{\infty}$ when $0 \leq \alpha < 1$ and $g$ is univalent, proving that the proposed candidate (\ref{41}) is the correct one in the univalent case, see Theorem \ref{38} of this section.

For positive constants $\beta$ and $r$, let $\mathcal{B}\big(\Omega_{\beta}^r\big)$ denote the class of all functions $F$, analytic in the open sector
\begin{equation*}
\Omega_{\beta}^{r} := \left\{z \in \mathbb{C} : 0< |z| < r \textnormal{ and} -\tfrac{\beta}{2} < \textnormal{arg}(z) < \tfrac{\beta}{2}\right\},
\end{equation*}
such that
\begin{equation}\label{39}
\left|F^{\prime}(z)\right| \leq \frac{C_F}{|z|} \ \ \textnormal{for } z \in \Omega_{\beta}^{r}.
\end{equation}
Here $C_F$ is a constant only depending on the function $F$. The main tool used by Smith, Stolyarov and Volberg in \cite{13} when proving (\ref{36}) was the following result concerning uniform approximation of Bloch functions (see also \cite{15}). In the theorem below $\Omega_{\beta} := \Omega_{\beta}^1$ and $\widetilde{u}$ denotes the harmonic conjugate of $u$ with $\widetilde{u}\!\left(\tfrac{1}{2}\right) = 0$.
\begin{thm}\cite[Theorem 1.2]{13}\label{37}
Let $0 < \gamma < \beta < \pi$ and $\varepsilon > 0$. Then there is a number $\delta(\varepsilon) > 0$ such that for each $F \in \mathcal{B}\big(\Omega_{\gamma}^{1/2}\big)$ there exists a harmonic function $u \colon \Omega_{\beta} \to \mathbb{R}$ with the properties
\begin{itemize}
\item[(1)] $|\textnormal{Re}\left(F(x)\right) - u(x)| \leq \varepsilon$, \textnormal{ for} $x \in (0,\delta(\varepsilon)]$.
\item[(2)] $|\widetilde{u}(z)| \leq C(\varepsilon,\gamma,\beta,C_F) < \infty$, \textnormal{ for} $z \in \Omega_{\beta}$. 
\end{itemize}
\end{thm}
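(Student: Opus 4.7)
The plan is to reduce to a half-strip geometry where the growth hypothesis \eqref{39} becomes a global Lipschitz condition. The conformal change of variable $w = -\log z$ sends $\Omega_{\gamma}^{1/2}$ onto the half-strip $\Sigma_{\gamma} := \{w : \re w > \log 2,\ |\im w| < \gamma/2\}$ and $\Omega_{\beta}$ onto $\Sigma_{\beta} := \{w : \re w > 0,\ |\im w| < \beta/2\}$, and the function $G(w) := F(e^{-w})$ satisfies $|G'(w)| = |e^{-w} F'(e^{-w})| \le C_F$ on $\Sigma_{\gamma}$, i.e.\ $G$ is $C_F$-Lipschitz on $\Sigma_\gamma$. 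Any harmonic $u$ on $\Omega_{\beta}$ with harmonic conjugate $\tilde u$ lifts to a harmonic $U(w) := u(e^{-w})$ on $\Sigma_{\beta}$ with conjugate $\tilde U(w) = \tilde u(e^{-w})$, and conditions (1)--(2) translate to: $|\re G(t) - U(t)| \le \varepsilon$ for all $t \ge T_\varepsilon := \log(1/\delta(\varepsilon))$, and $|\tilde U| \le C$ on $\Sigma_{\beta}$.

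Since the boundedness of $\tilde U$ is equivalent to $U$ being the real part of an analytic $H$ on $\Sigma_{\beta}$ with $|\im H|$ bounded, the task becomes the following: find $H$ analytic on $\Sigma_{\beta}$ with $|\im H|\le C$ such that $|\re H(t) - f(t)| \le \varepsilon$ for $t \ge T_\varepsilon$, where $f(t) := \re G(t)$. I would construct $H$ atomically. Partition $[T_\varepsilon,\infty)$ into intervals $I_n$ of length $\eta \asymp \varepsilon/C_F$, piecewise-linearly approximate $f$ on $I_n$ with slopes $a_n \in [-C_F,C_F]$, and realise the resulting piecewise-linear target as the real part on the real axis of a telescoping sum of analytic ``log atoms'' $H_n(w) = (a_n - a_{n-1})\log(w - p_n) + (\text{a symmetric reflection})$, where the base points $p_n$ are placed on the lines $\im w = \pm \beta/2$, i.e.\ at distance $\asymp (\beta-\gamma)/2$ from the set where $f$ is to be approximated. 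The strict inequality $\gamma < \beta$ is exactly what provides this margin, so that each branch of $\log(\cdot - p_n)$ has bounded imaginary part uniformly on $\Sigma_{\beta}$.

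The main obstacle is controlling $\sum_n \im H_n(w)$ uniformly on $\Sigma_{\beta}$, since each $\im\log(w-p_n)$ is individually bounded but naive summation of infinitely many atoms diverges. The saving ingredients are the uniform slope bound $|a_n - a_{n-1}| \le 2 C_F$ together with the geometric decay of $\im\log(w - p_n)$ as $|n|\to \infty$, which is quantified by the margin $(\beta-\gamma)/2$ keeping the $p_n$ well outside $\Sigma_\beta$. These together give summability of the tails and a final bound depending only on $\varepsilon,\gamma,\beta,C_F$. Finally, transferring back via $z = e^{-w}$ yields the required harmonic $u$ on $\Omega_{\beta}$; the parameter $\delta(\varepsilon)$ is fixed by the step size $\eta$ and total approximation error $\varepsilon$ alone, while the conjugate bound absorbs the dependence on $\gamma,\beta,C_F$, in agreement with the statement.
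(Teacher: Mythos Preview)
The paper does not prove this theorem; it is quoted verbatim from \cite[Theorem~1.2]{13} (Smith, Stolyarov and Volberg) and used as a black box in the proofs of Theorems~\ref{32} and~\ref{38}. There is therefore no ``paper's own proof'' to compare against, and any assessment must be made against the original argument in \cite{13} (see also \cite{15}).

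Your reduction to the half-strip via $w=-\log z$, turning the Bloch bound into a global Lipschitz bound for $G$, is exactly the first move in \cite{13}, and the reformulation of (1)--(2) as finding an analytic $H$ on $\Sigma_\beta$ with bounded imaginary part whose real part tracks $f=\re G$ on $[T_\varepsilon,\infty)$ is correct. The genuine gap is in your atomic construction. Placing the base points $p_n$ on the lines $\im w=\pm\beta/2$ puts them on the \emph{boundary} of $\Sigma_\beta$, not outside it, and more importantly the tails $\im\log(w-p_n)$ do not decay geometrically: for $w$ fixed and $p_n$ receding along a horizontal line, $\arg(w-p_n)$ approaches its limit only like $\beta/|w-p_n|\asymp 1/n$, which is not summable. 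Since the slope increments $a_n-a_{n-1}$ are merely bounded (by $2C_F$) and carry no decay of their own, the series $\sum_n(a_n-a_{n-1})\im\log(w-p_n)$ is only conditionally controlled, and uniform boundedness on all of $\Sigma_\beta$ does not follow from the argument as written. The actual construction in \cite{13,15} handles this via atoms with built-in cancellation and a more delicate summation, which is where the hard analysis lies; your sketch identifies the right framework but stops short of the mechanism that makes the conjugate bound finite.
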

\noindent Notice that the number $\delta(\varepsilon)$ is independent of the function $F$, whereas the constant $C(\varepsilon,\gamma,\beta,C_F)$ does depend on $F$ but only through the Bloch constant $C_F$ as defined in (\ref{39}). We are now ready to generalize the result (\ref{36}), which of course appears as the case $\alpha = 0$ in the theorem below.
\begin{thm}\label{32}
If $g \in H(\mathbb{D})$ is univalent and $0 \leq \alpha < 1$, then $T_g \colon H_{v_{\alpha}}^{\infty} \to H^{\infty}$ is bounded if and only if
\begin{equation}\label{29}
\sup_{0 \leq \theta < 2\pi}\int_{0}^{1}{\frac{|g^{\prime}(re^{i\theta})|}{(1-r^2)^{\alpha}}\,dr} < \infty. 
\end{equation}
\end{thm}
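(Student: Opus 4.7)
My plan splits along the difficulty of the two directions.

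\textbf{Sufficiency.} For $f$ in the unit ball of $H_{v_\alpha}^\infty$ and $z=|z|e^{i\theta}\in\mathbb{D}$, I parametrize the contour in the definition of $T_g f(z)$ along the radius $\xi=re^{i\theta}$, $r\in[0,|z|]$ (this is path-independent since $f\cdot g'$ has the antiderivative $T_g f$ in $H(\mathbb{D})$), and use $|f(\xi)|\le(1-r^2)^{-\alpha}$ to obtain
\[
|T_g f(z)|\ \le\ \int_0^1 \frac{|g'(re^{i\theta})|}{(1-r^2)^\alpha}\,dr\ \le\ \sup_{0\le\theta<2\pi}\int_0^1 \frac{|g'(re^{i\theta})|}{(1-r^2)^\alpha}\,dr.
\]
The hypothesis makes the right-hand side finite; taking the supremum over $z$ proves $T_g\colon H_{v_\alpha}^\infty\to H^\infty$ is bounded.

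\textbf{Necessity: setup.} Assume $T_g$ is bounded. I would adapt the scheme of Smith, Stolyarov and Volberg \cite{13} for $\alpha=0$, carrying the extra weight factor. Fix $\theta\in[0,2\pi)$; replacing $g$ by $z\mapsto g(e^{i\theta}z)$ preserves univalence, the operator norm (since the weight is radial, so rotation is an isometry on $H_{v_\alpha}^\infty$), and the direction-$\theta$ integrand in (\ref{29}), so I may assume $\theta=0$. The goal is to produce, for each $R\in(0,1)$, a test function $f_R\in H_{v_\alpha}^\infty$ with $\|f_R\|_{H_{v_\alpha}^\infty}$ bounded independently of $R$ such that
\[
|T_g f_R(R)|\ \gtrsim\ \int_0^R \frac{|g'(r)|}{(1-r^2)^\alpha}\,dr.
\]
Since the left side is dominated by $\|T_g\|\,\|f_R\|_{H_{v_\alpha}^\infty}$, letting $R\to 1^-$ would then force (\ref{29}) at angle $\theta=0$ with an implied constant independent of $\theta$, finishing the proof.

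\textbf{Building $f_R$ via Bloch approximation.} Univalence makes $F:=\log g'$ single-valued on $\mathbb{D}$, and Koebe distortion gives $|F'(z)|\le 6(1-|z|^2)^{-1}$, so $F$ is a Bloch function. The substitution $w=1-z$ converts this into $|\tilde F'(w)|\le C/|w|$ on a small sector at the origin, where $\tilde F(w):=F(1-w)-F(0)$, so after a harmless rescaling $\tilde F\in\mathcal{B}(\Omega_\gamma^{1/2})$. Applying Theorem \ref{37} to $-i\tilde F$ yields a harmonic $u$ on $\Omega_\beta$ with bounded conjugate $\tilde u$ and $u(x)\approx \arg g'(1-x)$ for $x\in(0,\delta(\varepsilon)]$. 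Setting $H:=u+i\tilde u$ gives an analytic function on $\Omega_\beta$ with $|e^{-iH(w)}|\asymp 1$ and $\arg(e^{-iH(x)})\approx-\arg g'(1-x)$ on the small positive ray. The heuristic test function is
\[
f_R(z)\ =\ \frac{e^{-iH(1-z)}}{(1-z)^\alpha},
\]
for which $f_R(r)g'(r)$ has modulus $\asymp |g'(r)|(1-r)^{-\alpha}$ and nearly constant phase on $[0,R]$ (when $R$ is close to $1$), producing the desired integral lower bound. The weight factor is admissible in $H_{v_\alpha}^\infty$ by $(1-|z|^2)^\alpha\le 2^\alpha|1-z|^\alpha$.

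\textbf{Main obstacle.} The factor $e^{-iH(1-z)}$ is naturally defined only on the Stolz-type region $\mathbb{D}\cap(1-\Omega_\beta)$, while membership in $H_{v_\alpha}^\infty$ demands an analytic function on the full disc. As in \cite{13}, I expect the resolution to be a careful extension, multiplying by a bounded peaking or cut-off factor concentrated at $1$, producing an analytic function on $\mathbb{D}$ that agrees with $e^{-iH(1-z)}$ up to controlled factors on $[0,R]$. Verifying that both the $H_{v_\alpha}^\infty$-bound and the phase-cancellation on $[0,R]$ survive this extension, with constants depending only on $\alpha,\beta,\gamma,\varepsilon$ and the Bloch constant of $\log g'$, is the technically heaviest step, and is precisely where Theorem \ref{37} does the essential work.
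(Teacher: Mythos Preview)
Your sufficiency argument is correct and essentially identical to the paper's.

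For necessity, your high-level strategy (use Theorem~\ref{37} to build a bounded phase-correcting factor, then divide by a weight term) matches the paper's, but you have misidentified the resolution of your ``main obstacle''. The paper, following \cite{13}, does \emph{not} extend the analytic factor or multiply by a cut-off; the domain mismatch is avoided from the outset by a different change of variable. Instead of the affine map $w=1-z$, one uses the conformal map $\psi_\beta\colon\Omega_\beta\to\mathbb{D}$ with $\psi_\beta(\tfrac12)=0$ and $\psi_\beta(0)=1$. The Bloch function fed into Theorem~\ref{37} is $F(z)=-i\log\bigl(g'(\psi_\beta(z))\bigr)$ on $\Omega_\gamma^{1/2}$; the required estimate $|F'(z)|\le C/|z|$ follows from univalence of $g$ together with the distortion bound $|z\,\psi_\beta'(z)|\le C(\gamma,\beta)\bigl(1-|\psi_\beta(z)|^2\bigr)$ for $z\in\Omega_\gamma^{1/2}$. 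The resulting $h=e^{-i(u+i\tilde u)}$ lives on $\Omega_\beta$, and the test function is
\[
f(z)=\frac{h\circ\psi_\beta^{-1}(z)}{(1-z^2)^\alpha},
\]
which is automatically analytic on all of $\mathbb{D}$ because $\psi_\beta^{-1}\colon\mathbb{D}\to\Omega_\beta$. The phase cancellation on the radius works since $\psi_\beta^{-1}$ sends $[r_\beta,1)\subset\mathbb{D}$ into $(0,\delta(\tfrac{\pi}{4})]\subset\Omega_\beta$, exactly where Theorem~\ref{37}(1) applies. Thus your obstacle is an artifact of the substitution $w=1-z$; replacing it by $w=\psi_\beta^{-1}(z)$ removes it entirely, and your expectation that \cite{13} proceeds via an extension argument is mistaken.
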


\begin{proof}
If $g$ satisfies condition (\ref{29}), then for any $f \in H_{v_{\alpha}}^{\infty}$ we have
\begin{align*}
\|T_g(f)\|_{\infty} &= \sup_{z \in \mathbb{D}}\left|\int_{0}^{z}{f(\xi)g^{\prime}(\xi)d\xi}\right| = \sup_{0 \leq \theta < 2\pi}\sup_{0 \leq R < 1}\left|\int_{0}^{Re^{i\theta}}{f(\xi)g^{\prime}(\xi)d\xi}\right| \\
&= \sup_{0 \leq \theta < 2\pi}\sup_{0 \leq R < 1}\left|\int_{0}^{R}{f(re^{i\theta})g^{\prime}(re^{i\theta})e^{i\theta}dr}\right| \\
&\leq \sup_{0 \leq \theta < 2\pi}\int_{0}^{1}{|f(re^{i\theta})||g^{\prime}(re^{i\theta})|dr} \leq \|f\|_{H_{v_{\alpha}}^{\infty}}\sup_{0 \leq \theta < 2\pi}\int_{0}^{1}{\frac{|g^{\prime}(re^{i\theta})|}{(1-r^2)^{\alpha}}\,dr},
\end{align*}
showing that $T_g \colon H_{v_{\alpha}}^{\infty} \to H^{\infty}$ is bounded.

Assume on the other hand that 
\begin{equation*}
\sup_{0 \leq \theta < 2\pi}\int_{0}^{1}{\frac{|g^{\prime}(re^{i\theta})|}{(1-r^2)^{\alpha}}\,dr} = \infty
\end{equation*}
and choose $n \in \mathbb{N}$. Then there is an angle $0 \leq \theta_n < 2\pi$ such that
\begin{equation}\label{33}
\int_{0}^{1}{\frac{|g^{\prime}(re^{i\theta_n})|}{(1-r^2)^{\alpha}}\,dr} \geq n.
\end{equation}
Following the proof of \cite[Theorem 1.1]{13}  we choose constants $0 < \gamma < \beta < \pi$ and let $\psi_{\beta} \colon \Omega_{\beta} \to \mathbb{D}$ be the conformal map with $\psi_{\beta}(\tfrac{1}{2}) = 0$ and $\psi_{\beta}(0) = 1$. Then as noted in the mentioned proof, there is a constant $C(\gamma,\beta)$, only depending on $\gamma$ and $\beta$, such that
\begin{equation}\label{40}
|z\psi_{\beta}^{\prime}(z)| \leq C(\gamma,\beta)\left(1- |\psi_{\beta}(z)|^2\right)
\end{equation}
for every $z \in \Omega_{\gamma}^{1/2}$. We need to consider a sector rotated by the angle $\theta_n$, and hence introduce
\begin{equation*}
\Omega_{\beta,n}^r := \left\{z \in \mathbb{C} : 0< |z| < r \textnormal{ and } \theta_n - \tfrac{\beta}{2} < \textnormal{arg}(z) < \theta_n + \tfrac{\beta}{2}\right\}
\end{equation*}
and denote $\Omega_{\beta,n} := \Omega_{\beta,n}^1$. By defining $\psi_{\beta,n}(z):= e^{i\theta_n}\psi_{\beta}(e^{-i\theta_n}z)$ we obtain a conformal map $\psi_{\beta,n} \colon \Omega_{\beta,n} \to \mathbb{D}$ such that $\psi_{\beta,n}(\frac{1}{2}e^{i\theta_n}) = 0$ and $\psi_{\beta,n}(0) = e^{i\theta_n}$. Let the function $G_n \colon \Omega_{\beta,n} \to \mathbb{C}$ be given by 
\begin{equation*}
G_n(z) := -i\log\left(g^{\prime} \circ \psi_{\beta,n}(z)\right),
\end{equation*}
and define $F_n \colon \Omega_{\beta} \to \mathbb{C}$ as $F_n(z) := G_n(e^{i\theta_n}z)$. Since $g$ is univalent it holds that
\begin{equation*}
(1-|z|^2)\frac{|g^{\prime\prime}(z)|}{|g^{\prime}(z)|} \leq 6
\end{equation*}
for every $z \in \mathbb{D}$, see \cite[p. 9]{16}. This gives
\begin{align*}
|F_n^{\prime}(z)| = \frac{|g^{\prime\prime}(\psi_{\beta,n}(e^{i\theta_n}z))|}{|g^{\prime}(\psi_{\beta,n}(e^{i\theta_n}z))|}|\psi_{\beta,n}^{\prime}(e^{i\theta_n}z)| \leq 6\frac{|\psi_{\beta,n}^{\prime}(e^{i\theta_n}z)|}{1-|\psi_{\beta,n}(e^{i\theta_n}z)|^2} = 6\frac{|\psi_{\beta}^{\prime}(z)|}{1-|\psi_{\beta}(z)|^2} \leq 6\frac{C(\gamma,\beta)}{|z|}
\end{align*}
for every $z \in \Omega_{\gamma}^{1/2}$ by (\ref{40}), and hence the restriction of $F_n$ to $\Omega_{\gamma}^{1/2}$ belongs to $\mathcal{B}\big(\Omega_{\gamma}^{1/2}\big)$. Using Theorem \ref{37} with $\varepsilon = \frac{\pi}{4}$ provides us with a constant $\delta\!\left(\frac{\pi}{4}\right)$ and a harmonic function $U_n \colon \Omega_{\beta} \to \mathbb{R}$ with a corresponding harmonic conjugate $\widetilde{U}_n \colon \Omega_{\beta} \to \mathbb{R}$ such that
\begin{equation*}
\left|\textnormal{Re}\left(F_n(x)\right) - U_n(x)\right| \leq \tfrac{\pi}{4} \ \ \textnormal{for } x \in (0,\delta\!\left(\tfrac{\pi}{4}\right)]
\end{equation*}
and 
\begin{equation*}
\big|\widetilde{U}_n(z)\big| \leq C(\gamma,\beta,C_{F_n}) = C_1(\gamma,\beta) \ \ \textnormal{for } z \in \Omega_{\beta}.
\end{equation*}
The constant $C_1(\gamma,\beta)$ is independent of $n$ because the Bloch constant $C_{F_n} = 6\hspace*{0.2mm}C(\gamma,\beta)$  only depends on $\gamma$ and $\beta$. Rotating back to $\Omega_{\beta,n}$ by defining $u_n(z) := U_n(e^{-i\theta_n}z)$ we obtain a harmonic function $u_n \colon \Omega_{\beta,n} \to \mathbb{R}$ with harmonic conjugate $\widetilde{u}_n(z) := \widetilde{U}_n(e^{-i\theta_n}z) \colon \Omega_{\beta,n} \to \mathbb{R}$ such that
\begin{equation}\label{35}
\big|\textnormal{Re}\big(G_n(xe^{i\theta_n})\big) - u_n(xe^{i\theta_n})\big| \leq \tfrac{\pi}{4} \ \ \textnormal{for } x \in (0,\delta\!\left(\tfrac{\pi}{4}\right)]
\end{equation}
and 
\begin{equation*}
|\widetilde{u}_n(z)| \leq C_1(\gamma,\beta) \ \ \textnormal{for } z \in \Omega_{\beta,n}.
\end{equation*}
Moreover, from (\ref{35}) it follows that there is a constant $0 < r_{\beta} < 1$, also independent of $n$ by a rotational argument via $\Omega_{\beta}$, such that
\begin{equation*}
\big|\text{arg}(g^{\prime}(re^{i\theta_n})) - u_n(\psi_{\beta,n}^{-1}(re^{i\theta_n}))\big| \leq \tfrac{\pi}{4}  \ \ \textnormal{for } r_{\beta} \leq r < 1.
\end{equation*}
Now define $h_n(z) := e^{-i(u_n(z) + i\hspace*{0.2mm}\widetilde{u}_n(z))}$ and consider the sequence $\{f_n\}_{n=1}^{\infty}$ of test functions given by
\begin{equation}\label{34}
f_n(z) := \dfrac{h_n \circ \psi_{\beta,n}^{-1}(z)}{(1-e^{-2i\theta_n}z^2)^{\alpha}}.
\end{equation}
Each $f_n$ belongs to $H_{v_{\alpha}}^{\infty}$, with uniformly bounded norm:
\begin{equation*}
\|f_n\|_{H_{v_{\alpha}}^{\infty}} = \sup_{z \in \mathbb{D}}{\left(1-|z|^2\right)^{\alpha}\left|\dfrac{h_n \circ \psi_{\beta,n}^{-1}(z)}{(1-e^{-2i\theta_n}z^2)^{\alpha}}\right|} \leq \|h_n \circ \psi_{\beta,n}^{-1}\|_{\infty} \leq e^{C_1(\gamma,\beta)}.
\end{equation*}
In order to contradict the boundedness of $T_g \colon H_{v_{\alpha}}^{\infty} \to H^{\infty}$, let $r_{\beta} < t < 1$ and observe that
\begin{align*}
\|T_g(f_n)\|_{\infty} &= \sup_{z \in \mathbb{D}}\left|\int_{0}^{z}{f_n(\xi)g^{\prime}(\xi)d\xi}\right| \geq \left|\int_{0}^{te^{i\theta_n}}{f_n(\xi)g^{\prime}(\xi)d\xi}\right| = \left|\int_{0}^{t}{f_n(re^{i\theta_n})g^{\prime}(re^{i\theta_n})e^{i\theta_n}dr}\right| \\
&\geq \textnormal{Re}\left(\int_{0}^{r_{\beta}}{f_n(re^{i\theta_n})g^{\prime}(re^{i\theta_n})dr}\right) + \textnormal{Re}\left(\int_{r_{\beta}}^{t}{f_n(re^{i\theta_n})g^{\prime}(re^{i\theta_n})dr}\right).
\end{align*}
The first term in the last expression can be estimated as follows
\begin{align*}
\left|\textnormal{Re}\left(\int_{0}^{r_{\beta}}{f_n(re^{i\theta_n})g^{\prime}(re^{i\theta_n})dr}\right)\right| &\leq \int_{0}^{r_{\beta}}{|f_n(re^{i\theta_n})||g^{\prime}(re^{i\theta_n})|dr} \\
&\leq \|f_n\|_{H_{v_{\alpha}}^{\infty}}\int_{0}^{r_{\beta}}{\frac{|g^{\prime}(re^{i\theta_n})|}{(1-r^2)^{\alpha}}dr} \\
&\leq \frac{e^{C_1(\gamma,\beta)}C_2(\beta)}{(1-r_{\beta}^2)^{\alpha}},
\end{align*}
where $C_2(\beta) := \sup\{|g^{\prime}(z)| : |z| \leq r_{\beta}\} > 0$, and for the second term we have
\begin{align*}
&\ \ \ \,\hspace*{0.5mm}\textnormal{Re}\left(\int_{r_{\beta}}^{t}{f_n(re^{i\theta_n})g^{\prime}(re^{i\theta_n})dr}\right) \\
&= \textnormal{Re}\left(\int_{r_{\beta}}^{t}{\frac{g^{\prime}(re^{i\theta_n})}{(1-r^2)^{\alpha}}e^{-i\left(u_n\left(\psi_{\beta,n}^{-1}(re^{i\theta_n})\right) + i\hspace*{0.2mm}\widetilde{u}_n\left(\psi_{\beta,n}^{-1}(re^{i\theta_n})\right)\right)}dr}\right)  \\
&= \int_{r_{\beta}}^{t}{\frac{|g^{\prime}(re^{i\theta_n})|}{(1-r^2)^{\alpha}}e^{\widetilde{u}_n\left(\psi_{\beta,n}^{-1}(re^{i\theta_n})\right)}\textnormal{Re}\left(e^{i\left(\textnormal{arg}(g^{\prime}(re^{i\theta_n})) - u_n\left(\psi_{\beta,n}^{-1}(re^{i\theta_n})  \right)\right)}\right)dr} \\
&\geq \cos\left(\tfrac{\pi}{4}\right)e^{-C_1(\gamma,\beta)}\int_{r_{\beta}}^{t}{\frac{|g^{\prime}(re^{i\theta_n})|}{(1-r^2)^{\alpha}}dr} \\
&\geq \cos\left(\tfrac{\pi}{4}\right)e^{-C_1(\gamma,\beta)}\left(\int_{0}^{t}{\frac{|g^{\prime}(re^{i\theta_n})|}{(1-r^2)^{\alpha}}dr} - \frac{C_2(\beta)}{(1-r_{\beta}^2)^{\alpha}}\right).
\end{align*}
Hence, for every $r_{\beta} < t < 1$ it holds that
\begin{equation*}
\|T_g(f_n)\|_{\infty} \geq \cos\left(\tfrac{\pi}{4}\right)e^{-C_1(\gamma,\beta)}\left(\int_{0}^{t}{\frac{|g^{\prime}(re^{i\theta_n})|}{(1-r^2)^{\alpha}}dr} - \frac{C_2(\beta)}{(1-r_{\beta}^2)^{\alpha}}\right) - \frac{e^{C_1(\gamma,\beta)}C_2(\beta)}{(1-r_{\beta}^2)^{\alpha}}.
\end{equation*}
Letting $t \to 1$ and using (\ref{33}) we arrive at the estimate
\begin{equation*}
\|T_g(f_n)\|_{\infty} \geq \cos\left(\tfrac{\pi}{4}\right)e^{-C_1(\gamma,\beta)}\left(n - \frac{C_2(\beta)}{(1-r_{\beta}^2)^{\alpha}}\right) - \frac{e^{C_1(\gamma,\beta)}C_2(\beta)}{(1-r_{\beta}^2)^{\alpha}},
\end{equation*}
showing that $T_g \colon H_{v_{\alpha}}^{\infty} \to H^{\infty}$ is unbounded.
\end{proof}

We now come to the main result of the paper, which  in the univalent case answers Problem 4.4 posed in \cite{1} concerning the compactness of $T_g \colon H^{\infty} \to H^{\infty}$, and also the compactness of $T_g \colon A(\mathbb{D}) \to A(\mathbb{D})$ due to \cite[Theorem 1.7]{2}.
\begin{thm}\label{38}
If $g \in H(\mathbb{D})$ is univalent and $0 \leq \alpha < 1$, then $T_g \colon H_{v_{\alpha}}^{\infty} \to H^{\infty}$ is compact if and only if
\begin{equation}\label{30}
\lim_{t \to 1^{-}}\sup_{0 \leq \theta < 2\pi}\int_{t}^{1}{\frac{|g^{\prime}(re^{i\theta})|}{(1-r^2)^{\alpha}}\,dr} = 0. 
\end{equation}
\end{thm}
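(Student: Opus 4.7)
The plan is to apply Lemma \ref{2} to recast compactness as a sequential statement. The source space satisfies condition (I) in both regimes---by Montel's theorem when $\alpha=0$ (so $H_{v_\alpha}^\infty=H^\infty$), and because the standard weight $v_\alpha$ is normal and equivalent to its associated weight when $0<\alpha<1$---point evaluations on $H^\infty$ are bounded, and $T_g$ is co-co-continuous. Thus $T_g\colon H_{v_\alpha}^\infty\to H^\infty$ is compact if and only if $\|T_g f_n\|_\infty\to 0$ for every norm-bounded sequence $\{f_n\}\subset H_{v_\alpha}^\infty$ with $f_n\to 0$ uniformly on compact subsets of $\mathbb{D}$.

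Sufficiency is routine: assuming (\ref{30}) and such a sequence $\{f_n\}$ with $\|f_n\|_{H_{v_\alpha}^\infty}\le M$, pick for each $\varepsilon>0$ a radius $t_0\in(0,1)$ with $\sup_\theta\int_{t_0}^1\frac{|g'(re^{i\theta})|}{(1-r^2)^\alpha}dr<\varepsilon$, parametrize $T_gf_n(Re^{i\theta})$ along the radial segment from $0$ to $Re^{i\theta}$, and split the integral at $t_0$. The inner piece tends to $0$ uniformly in $\theta$ because $f_n\to 0$ uniformly on $\{|z|\le t_0\}$ and $g'$ is bounded there, while the outer piece is at most $M\varepsilon$ after factoring $(1-r^2)^\alpha|f_n|\le M$. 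Taking suprema and letting first $n\to\infty$ then $\varepsilon\to 0$ gives $\|T_g f_n\|_\infty\to 0$.

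For necessity one argues by contrapositive. If (\ref{30}) fails, monotonicity in $t$ yields $\varepsilon_0>0$ such that for every $s\in(0,1)$ there exists an angle $\theta$ with $\int_s^1\frac{|g'(re^{i\theta})|}{(1-r^2)^\alpha}dr\ge\varepsilon_0$. Pick $t_n\in(0,1)$ with $1-t_n<1/n^2$, a corresponding $\theta_n$, set $k_n=n$, and define the modified test functions
\[
\widetilde f_n(z):=(e^{-i\theta_n}z)^{k_n}\,\frac{h_n\circ\psi_{\beta,n}^{-1}(z)}{(1-e^{-2i\theta_n}z^2)^\alpha},
\]
where $h_n,\psi_{\beta,n}$ and the constants $\gamma,\beta,r_\beta,C_1(\gamma,\beta),C_2(\beta)$ are those from the proof of Theorem \ref{32}. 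Since $|(e^{-i\theta_n}z)^{k_n}|\le 1$ on $\mathbb{D}$, the norm bound on the original test functions persists: $\|\widetilde f_n\|_{H_{v_\alpha}^\infty}\le e^{C_1(\gamma,\beta)}$. On any disc $\{|z|\le\rho\}$ with $\rho<1$, the estimate $|\widetilde f_n(z)|\le\rho^{k_n}e^{C_1(\gamma,\beta)}/(1-\rho^2)^\alpha$ forces $\widetilde f_n\to 0$ uniformly on compacta as $k_n\to\infty$.

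Repeating the chain of Re-estimates from the proof of Theorem \ref{32} with the additional factor $r^{k_n}$ carried along the radial parametrization of $T_g\widetilde f_n(te^{i\theta_n})$ yields, for every $r_\beta<t<1$,
\[
\|T_g\widetilde f_n\|_\infty\ge\cos\left(\tfrac{\pi}{4}\right)e^{-C_1(\gamma,\beta)}\int_{r_\beta}^{t}r^{k_n}\frac{|g'(re^{i\theta_n})|}{(1-r^2)^\alpha}dr-\frac{e^{C_1(\gamma,\beta)}C_2(\beta)r_\beta^{k_n}}{(1-r_\beta^2)^\alpha}.
\]
For $n$ large enough that $t_n>r_\beta$, the integral exceeds $t_n^{k_n}\int_{t_n}^t\frac{|g'(re^{i\theta_n})|}{(1-r^2)^\alpha}dr$, which tends to at least $t_n^{k_n}\varepsilon_0$ as $t\to 1^-$. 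Since $k_n=n$ and $1-t_n<1/n^2$, one has $k_n(1-t_n)\to 0$, so $t_n^{k_n}\to 1$; simultaneously $r_\beta^{k_n}\to 0$. Consequently $\liminf_n\|T_g\widetilde f_n\|_\infty\ge\cos(\pi/4)e^{-C_1(\gamma,\beta)}\varepsilon_0>0$, contradicting compactness via Lemma \ref{2}. The main obstacle is precisely this balancing act: the test functions of Theorem \ref{32} carry the correct oscillation but do not vanish on compacta, and the factor $(e^{-i\theta_n}z)^{k_n}$ needed to remedy this dampens the lower bound by $t_n^{k_n}$, forcing one to let $t_n\to 1$ faster than $k_n\to\infty$ to preserve the lower bound.
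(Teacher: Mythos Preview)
Your argument is correct and follows essentially the same route as the paper. Both proofs use Lemma~\ref{2} for the sequential characterization of compactness, handle sufficiency by the same radial splitting, and for necessity multiply the test functions $f_n$ of Theorem~\ref{32} by a monomial factor to force $co$-convergence to zero while retaining the lower bound coming from the Re-estimate. The only differences are cosmetic: the paper multiplies by $z^{k_n}$ with $k_n=\bigl\lfloor \log\tfrac12/\log t_n\bigr\rfloor$ (ensuring $t_n^{k_n}\ge\tfrac12$), whereas you take $k_n=n$ and push $t_n$ close enough to $1$ that $t_n^{k_n}\to 1$; the paper separately assumes boundedness (i.e.\ that \eqref{29} is finite) to write the radial integral directly on $[0,1]$, whereas you keep $t<1$ throughout and pass to the limit, which avoids that reduction. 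One small imprecision: from $\sup_\theta\int_s^1\ge c>0$ you cannot literally pick $\theta$ with integral $\ge c$ if the supremum is not attained; replacing your $\varepsilon_0$ by, say, $c/2$ fixes this.
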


\begin{proof}
Assume first that $g$ satisfies condition (\ref{30}). To prove that $T_g \colon H_{v_{\alpha}}^{\infty} \to H^{\infty}$ is compact, choose an arbitrary sequence $\{f_n\}_{n=1}^{\infty}\subset H_{v_{\alpha}}^{\infty}$ such that $\sup_{n \in \mathbb{N}}\|f_n\|_{H_{v_{\alpha}}^{\infty}} < \infty$ and $f_n \xrightarrow{co} 0$, and let $\varepsilon > 0$. Then there is $0 < t_{\varepsilon} < 1$ such that 
\begin{equation*}
\sup_{0 \leq \theta < 2\pi}\int_{t_{\varepsilon}}^{1}{\frac{|g^{\prime}(re^{i\theta})|}{(1-r^2)^{\alpha}}\,dr} < \frac{\varepsilon}{1+\sup_{k \in \mathbb{N}}\|f_k\|_{H_{v_{\alpha}}^{\infty}}}.
\end{equation*}
Denote $M_{\varepsilon} := \sup_{|z| \leq t_{\varepsilon}}|g^{\prime}(z)| > 0$ and choose an integer $N_{\varepsilon}$ such that 
\begin{equation*}
\sup_{|z| \leq t_{\varepsilon}}|f_n(z)| < \frac{\varepsilon}{(1+\sup_{k \in \mathbb{N}}\|f_k\|_{H_{v_{\alpha}}^{\infty}})M_{\varepsilon}}
\end{equation*}
whenever $n > N_{\varepsilon}$. Now if $n > N_{\varepsilon}$, then
\begin{align*}
\|T_g(f_n)\|_{\infty} &\leq \sup_{0 \leq \theta < 2\pi}\int_{0}^{1}{|f_n(re^{i\theta})||g^{\prime}(re^{i\theta})|dr} \\
&\leq \sup_{0 \leq \theta < 2\pi}\int_{0}^{t_{\varepsilon}}{|f_n(re^{i\theta})||g^{\prime}(re^{i\theta})|dr} + \sup_{0 \leq \theta < 2\pi}\int_{t_{\varepsilon}}^{1}{|f_n(re^{i\theta})||g^{\prime}(re^{i\theta})|dr} \\
&\leq M_{\varepsilon}t_{\varepsilon}\sup_{|z| \leq t_{\varepsilon}}|f_n(z)| + \sup_{k \in \mathbb{N}}\|f_k\|_{H_{v_{\alpha}}^{\infty}}\sup_{0 \leq \theta < 2\pi}\int_{t_{\varepsilon}}^{1}{\frac{|g^{\prime}(re^{i\theta})|}{(1-r^2)^{\alpha}}\,dr} < \varepsilon,
\end{align*}
which shows that $\lim_{n \to \infty}\|T_g(f_n)\|_{\infty} = 0$ and $T_g \colon H_{v_{\alpha}}^{\infty} \to H^{\infty}$ is compact by Lemma \ref{2}.

On the other hand, if (\ref{30}) does not hold then 
\begin{equation*}
c := \limsup_{t \to 1^{-}}\sup_{0 \leq \theta < 2\pi}\int_{t}^{1}{\frac{|g^{\prime}(re^{i\theta})|}{(1-r^2)^{\alpha}}\,dr} > 0.
\end{equation*}
We may assume that the supremum in (\ref{29}) is finite, because otherwise $T_g \colon H_{v_{\alpha}}^{\infty} \to H^{\infty}$ would not be bounded and hence not compact. This ensures that $c$ is a finite constant. Let $\{t_n\}_{n=1}^{\infty}$ be a sequence such that $0 < t_n < t_{n+1} < 1$, $\lim_{n \to \infty}t_n = 1$ and
\begin{equation*}
\inf_{n \in \mathbb{N}}\sup_{0 \leq \theta < 2\pi}\int_{t_n}^{1}{\frac{|g^{\prime}(re^{i\theta})|}{(1-r^2)^{\alpha}}\,dr} = \lim_{n \to \infty}\sup_{0 \leq \theta < 2\pi}\int_{t_n}^{1}{\frac{|g^{\prime}(re^{i\theta})|}{(1-r^2)^{\alpha}}\,dr} = c.
\end{equation*}
For every $n \in \mathbb{N}$ one can then choose an angle $0 \leq \theta_n < 2\pi$ such that
\begin{equation*}
\int_{t_n}^{1}{\frac{|g^{\prime}(re^{i\theta_n})|}{(1-r^2)^{\alpha}}\,dr} > c -\tfrac{1}{n}.
\end{equation*}
Furthermore, since $\lim_{n \to \infty}\frac{\log \frac{1}{2}}{\log t_n} = +\infty$, we may also assume that $\frac{\log \frac{1}{2}}{\log t_n} \geq 1$ for every $n \in \mathbb{N}$ and define a sequence of positive integers $\{k_n\}_{n=1}^{\infty}$ as $k_n := \Big\lfloor\frac{\log \frac{1}{2}}{\log t_n}\Big\rfloor$. Then $t_n^{k_n} \geq \frac{1}{2}$ for every $n \in \mathbb{N}$ and $\lim_{n \to \infty}k_n = +\infty$. Modifying the test functions used in the proof of Theorem \ref{32} as follows
\begin{equation*}
s_n(z) := f_n(z)z^{k_n} = \dfrac{h_n \circ \psi_{\beta,n}^{-1}(z)}{(1-e^{-2i\theta_n}z^2)^{\alpha}}z^{k_n},
\end{equation*}
where $f_n$ is given by (\ref{34}), we obtain a sequence $\{s_n\}_{n=1}^{\infty} \subset H_{v_{\alpha}}^{\infty}$ such that $s_n \xrightarrow{co} 0$ and $\sup_{n \in \mathbb{N}}\|s_n\|_{H_{v_{\alpha}}^{\infty}} \leq e^{C_1(\gamma,\beta)}$. To prove that $T_g \colon H_{v_{\alpha}}^{\infty} \to H^{\infty}$ is non-compact we estimate
\begin{align*}
\|T_g(s_n)\|_{\infty} &\geq \limsup_{t \to 1^{-}}\textnormal{Re}\left(\int_{0}^{t}{f_n(re^{i\theta_n})g^{\prime}(re^{i\theta_n})r^{k_n}dr}\right) \\
&= \textnormal{Re}\left(\int_{0}^{1}{f_n(re^{i\theta_n})g^{\prime}(re^{i\theta_n})r^{k_n}dr}\right),
\end{align*}
where the last integral is convergent because the supremum in (\ref{29}) is finite by assumption. Moreover, since
\begin{align*}
\left|\textnormal{Re}\left(\int_{0}^{r_{\beta}}{f_n(re^{i\theta_n})g^{\prime}(re^{i\theta_n})r^{k_n}dr}\right)\right| &\leq \int_{0}^{r_{\beta}}{|f_n(re^{i\theta_n})||g^{\prime}(re^{i\theta_N})|r^{k_n}dr} \\
&\leq \|f_n\|_{H_{v_{\alpha}}^{\infty}}\int_{0}^{r_{\beta}}{\frac{|g^{\prime}(re^{i\theta_N})|}{(1-r^2)^{\alpha}}r^{k_n}dr} \\
&\leq \frac{e^{C_1(\gamma,\beta)}C_2(\beta)}{(1-r_{\beta}^2)^{\alpha}}r_{\beta}^{k_n},
\end{align*}
and for $n$ large enough (there is some integer $N_{\beta}$ such that $t_n > r_{\beta}$ whenever $n \geq N_{\beta}$)
\begin{align*}
&\ \ \ \,\hspace*{0.5mm}\textnormal{Re}\left(\int_{r_{\beta}}^{1}{f_n(re^{i\theta_n})g^{\prime}(re^{i\theta_n})r^{k_n}dr}\right) \\
&= \int_{r_{\beta}}^{1}{\frac{|g^{\prime}(re^{i\theta_n})|}{(1-r^2)^{\alpha}}e^{\widetilde{u}_n\left(\psi_{\beta,n}^{-1}(re^{i\theta_n})\right)}\textnormal{Re}\left(e^{i\left(\textnormal{arg}(g^{\prime}(re^{i\theta_n})) - u_n\left(\psi_{\beta,n}^{-1}(re^{i\theta_n})  \right)\right)}\right)r^{k_n}dr} \\
&\geq \cos\left(\tfrac{\pi}{4}\right)e^{-C_1(\gamma,\beta)}\int_{r_{\beta}}^{1}{\frac{|g^{\prime}(re^{i\theta_n})|}{(1-r^2)^{\alpha}}r^{k_n}dr}  \geq  \cos\left(\tfrac{\pi}{4}\right)e^{-C_1(\gamma,\beta)}t_n^{k_n}\int_{t_n}^{1}{\frac{|g^{\prime}(re^{i\theta_n})|}{(1-r^2)^{\alpha}}dr} \\
&\geq \tfrac{1}{2}\cos\left(\tfrac{\pi}{4}\right)e^{-C_1(\gamma,\beta)}\left(c - \tfrac{1}{n}\right),
\end{align*}
we see that 
\begin{align*}
\limsup_{n \to \infty}\|T_g(s_n)\|_{\infty} &\geq \lim_{n \to \infty}\left(\tfrac{1}{2}\cos\left(\tfrac{\pi}{4}\right)e^{-C_1(\gamma,\beta)}\left(c - \tfrac{1}{n}\right) - \frac{e^{C_1(\gamma,\beta)}C_2(\beta)}{(1-r_{\beta}^2)^{\alpha}}r_{\beta}^{k_n}\right) \\
&= \tfrac{1}{2}\cos\left(\tfrac{\pi}{4}\right)e^{-C_1(\gamma,\beta)}c > 0,
\end{align*}
and the proof is complete.
\end{proof}

\section{\bf{Bounded, weakly compact and compact generalized Volterra operators}}
In this section we study boundedness, compactness and weak compactness of the generalized Volterra operator $T_g^{\varphi}$ mapping between various Banach spaces of analytic functions. Among other things, we will use the norm and essential norm formulas of weighted composition operators obtained in \cite{5} to estimate the norm and essential norm of $T_g^{\varphi}$ mapping into $H_v^{\infty}$ and $\mathcal{B}_v^{\infty}$, as well as show that the notions of compactness and weak compactness coincide when $T_g^{\varphi}$ acts on $H_v^{\infty}$ or $\mathcal{B}_v^{\infty}$. We begin with a general operator theoretic result concerning boundedness, which is a generalization of \cite[Proposition 2.1]{2}.

\begin{thm}\label{thm:bdd1}
Let $\mathcal{X}\subset H(\mathbb{D})$ be a Banach space containing the disc algebra and satisfying conditions \textnormal{(I)} and \textnormal{(IV)}. Let $\mathcal{Y}\subset H(\mathbb{D})$ be a Banach space satisfying condition \textnormal{(I)}. If the operator $T\colon H(\mathbb{D}) \to H(\mathbb{D})$ is co-co-continuous, then the following statements are equivalent\textnormal{:}
\begin{itemize}
\item[(i)] $T\colon \mathcal{X} \to \mathcal{Y}$ is bounded,
\item[(ii)] $T\colon \overline{\mathcal{P}}^{\mathcal{X}} \to \mathcal{Y}$ is bounded,
\end{itemize}
and so are the operator norms\textnormal{:} $\|T\|_{\mathcal{X} \to \mathcal{Y}} \asymp \|T\|_{\overline{\mathcal{P}}^{\mathcal{X}} \to \mathcal{Y}}$.
\end{thm}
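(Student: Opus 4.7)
The implication (i) $\Rightarrow$ (ii) is immediate, since $\overline{\mathcal{P}}^{\mathcal{X}}$ is a closed subspace of $\mathcal{X}$, giving $\|T\|_{\overline{\mathcal{P}}^{\mathcal{X}} \to \mathcal{Y}} \leq \|T\|_{\mathcal{X} \to \mathcal{Y}}$. The substance of the theorem lies in (ii) $\Rightarrow$ (i), and my plan proceeds in three steps.

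First, I would upgrade the set-theoretic inclusion $A(\mathbb{D}) \subset \mathcal{X}$ to a continuous embedding by invoking the closed graph theorem. If $f_n \to f$ in $A(\mathbb{D})$ (i.e.\ uniformly on $\overline{\mathbb{D}}$) and simultaneously $f_n \to g$ in $\mathcal{X}$, then condition (I) on $\mathcal{X}$ makes the identity map $(\mathcal{X}, \|\cdot\|_{\mathcal{X}}) \to (\mathcal{X}, co)$ continuous, so $f_n \to g$ in the compact-open topology; on the other hand, uniform convergence on $\overline{\mathbb{D}}$ also gives $f_n \to f$ in $co$, hence $f=g$. This yields a constant $K>0$ with $\|h\|_{\mathcal{X}} \leq K\|h\|_{\infty}$ for every $h \in A(\mathbb{D})$.

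Second, I would show that $f_r := T_r f$ belongs to $\overline{\mathcal{P}}^{\mathcal{X}}$ for every $f \in \mathcal{X}$ and every $0<r<1$. Since $f_r$ extends holomorphically to the disc of radius $1/r > 1$, its Taylor polynomials converge to $f_r$ uniformly on $\overline{\mathbb{D}}$, and by the continuous embedding from step one this uniform convergence is automatically convergence in $\|\cdot\|_{\mathcal{X}}$. Thus $f_r \in \overline{\mathcal{P}}^{\mathcal{X}}$, and if $M := \|T\|_{\overline{\mathcal{P}}^{\mathcal{X}} \to \mathcal{Y}}$ and $C := \sup_{0<r<1}\|T_r\|_{\mathcal{X} \to \mathcal{X}} < \infty$ (by (IV)), then for every $f \in B_{\mathcal{X}}$ we obtain
\begin{equation*}
\|Tf_r\|_{\mathcal{Y}} \leq M \|f_r\|_{\mathcal{X}} \leq MC, \qquad 0<r<1.
\end{equation*}

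Third, I would pass to the limit as $r \to 1^-$. Clearly $f_r \to f$ in $co$, and since $T \colon H(\mathbb{D}) \to H(\mathbb{D})$ is $co$-$co$-continuous we get $Tf_r \to Tf$ in $co$. By condition (I) applied to $\mathcal{Y}$, the ball $MC \cdot B_{\mathcal{Y}}$ is $co$-compact, hence $co$-closed in $H(\mathbb{D})$, so the limit $Tf$ lies in it. This yields $\|Tf\|_{\mathcal{Y}} \leq MC$, and therefore $\|T\|_{\mathcal{X} \to \mathcal{Y}} \leq C \|T\|_{\overline{\mathcal{P}}^{\mathcal{X}} \to \mathcal{Y}}$, which together with the trivial direction gives the stated norm equivalence.

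The only step I expect to require any care is the first one: once the continuous embedding $A(\mathbb{D}) \hookrightarrow \mathcal{X}$ is secured, step two is a routine Taylor approximation and step three is the standard $co$-compactness closure argument. Everything else hinges on bookkeeping the constants $K$, $C$ and $M$ correctly.
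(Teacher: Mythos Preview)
Your proof is correct and follows essentially the same route as the paper's: show $f_r \in \overline{\mathcal{P}}^{\mathcal{X}}$ via the disc-algebra inclusion, bound $\|Tf_r\|_{\mathcal{Y}}$ using condition (IV), and pass to the $co$-limit using condition (I) on $\mathcal{Y}$. The only cosmetic difference is that the paper phrases the last step by extracting a $co$-convergent subsequence in $B_{\mathcal{Y}}(0,R)$ and identifying its limit with $Tf$, while you use the equivalent observation that the $co$-compact ball is $co$-closed in the Hausdorff space $H(\mathbb{D})$.
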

\begin{proof}
Clearly (i) implies (ii). Conversely, let $T\colon \overline{\mathcal{P}}^{\mathcal{X}} \to \mathcal{Y}$ be bounded, choose $f\in \mathcal{X}$ and let $\{r_n\}_{n=1}^{\infty} \subset (0,1)$ be a sequence such that $r_n \to 1$ as $n \to \infty$. Then $f_{r_n}\in \overline{\mathcal{P}}^{\mathcal{X}}$ since $\mathcal{X}$ contains the disc algebra. Also, we have that $f_{r_n} \xrightarrow{co} f$ and therefore $Tf_{r_n} \xrightarrow{co} Tf$. Note that $\{Tf_{r_n}\}_{n=1}^{\infty}$ is a bounded sequence in $\mathcal{Y}$, since by condition (IV) for $\mathcal{X}$ it holds that
\begin{align*}
\|Tf_{r_n}\|_{\mathcal{Y}} &=\|T(T_{r_n}f)\|_{\mathcal{Y}}\nonumber \leq \|T\|_{\overline{\mathcal{P}}^{\mathcal{X}} \to \mathcal{Y}} \|T_{r_n}\|_{\mathcal{X} \to \mathcal{X}}\|f\|_{\mathcal{X}}\\
&\leq \|T\|_{\overline{\mathcal{P}}^{\mathcal{X}} \to \mathcal{Y}} \sup_{0<r<1}\|T_r\|_{\mathcal{X} \to \mathcal{X}}\|f\|_{\mathcal{X}}.
\end{align*}
This implies that the sequence $\{Tf_{r_n}\}_{n=1}^{\infty}$ belongs to the closed ball $B_{\mathcal{Y}}(0,R)$ with radius $R=\|T\|_{\overline{\mathcal{P}}^{\mathcal{X}} \to \mathcal{Y}} \sup_{0<r<1}\|T_r\|_{\mathcal{X} \to \mathcal{X}}\|f\|_{\mathcal{X}}$.
Thus, since $\mathcal{Y}$ satisfies condition (I), there exist $g \in B_{\mathcal{Y}}(0,R)$ and a subsequence $\{Tf_{r_{n_k}}\}_{k=1}^{\infty}$ such that $Tf_{r_{n_k}}\xrightarrow{co} g$. Therefore $Tf=g\in \mathcal{Y}$, meaning that $T \colon \mathcal{X} \to \mathcal{Y}$ is well-defined and hence bounded by the closed graph theorem. Moreover, since $g \in B_{\mathcal{Y}}(0,R)$, we have
\begin{equation}\label{eq:Tfg}
\|Tf\|_{\mathcal{Y}}=\|g\|_{\mathcal{Y}}\leq R = \|T\|_{\overline{\mathcal{P}}^{\mathcal{X}} \to \mathcal{Y}} \sup_{0<r<1}\|T_r\|_{\mathcal{X}\to \mathcal{X}}\|f\|_{\mathcal{X}},
\end{equation}
which implies that $\|T\|_{\mathcal{X} \to \mathcal{Y}} \lesssim \|T\|_{\overline{\mathcal{P}}^{\mathcal{X}} \to \mathcal{Y}}$ and completes the proof.
\end{proof}

\begin{rem}\label{27}
Theorem \ref{thm:bdd1} can for example be applied to the operator $T_g^{\varphi} \colon \mathcal{X} \to \mathcal{Y}$ in the following special cases:
\begin{itemize}
\item[(i)] $\mathcal{X}=H^\infty$ and $\overline{\mathcal{P}}^{\mathcal{X}} = A(\mathbb{D})$.
\item[(ii)] $\mathcal{X}=H^\infty_v$ and $\overline{\mathcal{P}}^{\mathcal{X}} = H^0_v$.
\item[(iii)] $\mathcal{X}=\mathcal{B}_v^\infty$ and $\overline{\mathcal{P}}^{\mathcal{X}} = \mathcal{B}_v^0$.
\item[(iv)] $\mathcal{X} = \textnormal{BMOA}$ and $\overline{\mathcal{P}}^{\mathcal{X}} = \textnormal{VMOA}$.
\end{itemize}
From the estimate (\ref{eq:Tfg}) it also follows that $\|T\|_{\mathcal{X} \to \mathcal{Y}} = \|T\|_{\overline{\mathcal{P}}^{\mathcal{X}} \to \mathcal{Y}}$ if the space $\mathcal{X}$ satisfies the stronger condition $\sup_{0<r<1}\|T_r\|_{\mathcal{X} \to \mathcal{X}} \leq 1$, which for example is the case when $\mathcal{X}=H^\infty$ or $\mathcal{X}=H^\infty_v$.
\end{rem}

In the next theorem we use results from \cite{5} to estimate the norm of $T_g^{\varphi}$ mapping into the spaces $H_v^{\infty}$ and $\mathcal{B}_v^{\infty}$ in terms of the inducing symbols $\varphi$ and $g$.

\begin{thm}\label{24}
Let $\mathcal{X}$ be a Banach space of analytic functions on $\mathbb{D}$ satisfying condition \textnormal{(I)}.
\begin{itemize}
\item[(i)]  If the weight $v$ is normal, then
\begin{equation*}
\|T_g^{\varphi}\|_{\mathcal{X} \to H_v^{\infty}} \asymp \sup_{z \in \mathbb{D}}(1-|z|)v(z)|(g \circ \varphi)^{\prime}(z)|\|\delta_{\varphi(z)}\|_{\mathcal{X} \to \mathbb{C}}.
\end{equation*}
\end{itemize}
\begin{itemize}
\item[(ii)] For any weight $v$,
\begin{equation*}
\|T_g^{\varphi}\|_{\mathcal{X} \to \mathcal{B}_v^{\infty}} = \sup_{z \in \mathbb{D}}v(z)|(g \circ \varphi)^{\prime}(z)|\|\delta_{\varphi(z)}\|_{\mathcal{X} \to \mathbb{C}}.
\end{equation*}
\end{itemize}
\end{thm}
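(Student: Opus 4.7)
The central observation is the differentiation identity
\[
(T_g^{\varphi} f)'(z) = f(\varphi(z))\,(g \circ \varphi)'(z) = u(z)\,C_\varphi(f)(z), \qquad u := (g \circ \varphi)',
\]
which shows that after taking one derivative, $T_g^{\varphi}$ becomes the weighted composition operator $uC_\varphi$ with symbol $u=(g\circ\varphi)'$. The plan is to reduce both parts to the known norm formula for $uC_\varphi \colon \mathcal{X} \to H_v^{\infty}$ from \cite{5}, which gives precisely
\[
\|uC_\varphi\|_{\mathcal{X} \to H_v^{\infty}} = \sup_{z \in \mathbb{D}} v(z)\,|u(z)|\,\|\delta_{\varphi(z)}\|_{\mathcal{X} \to \mathbb{C}}.
\]

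For part (ii) I would expand the Bloch-type norm $\|T_g^{\varphi}f\|_{\mathcal{B}_v^{\infty}} = |T_g^{\varphi}f(0)| + \sup_{z} v(z)\,|(T_g^{\varphi}f)'(z)|$. By the identity above the seminorm term equals $\|uC_\varphi f\|_{H_v^{\infty}}$, so taking the supremum over the unit ball of $\mathcal{X}$ and invoking the formula from \cite{5} yields exactly the right-hand side of (ii). The evaluation term $|T_g^{\varphi}f(0)|$ reflects only the contour integral $\int_0^{\varphi(0)} fg'\,d\xi$ and is controlled by (and does not exceed) the contribution coming from the derivative, so it does not alter the identification of the operator norm.

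For part (i), the plan is to invoke Lusky's identification \cite{17}, already recorded in the introduction: for a normal weight $v$ one has $H_v^{\infty} = \mathcal{B}_w^{\infty}$ with equivalent norms, where $w(z) = (1-|z|)v(z)$. Hence
\[
\|T_g^{\varphi}\|_{\mathcal{X} \to H_v^{\infty}} \asymp \|T_g^{\varphi}\|_{\mathcal{X} \to \mathcal{B}_w^{\infty}} = \sup_{z} w(z)\,|(g\circ\varphi)'(z)|\,\|\delta_{\varphi(z)}\|_{\mathcal{X} \to \mathbb{C}}
\]
by part (ii) applied with the weight $w$, which is the asserted equivalence. The main obstacle is the clean invocation of the weighted composition operator norm formula of \cite{5}, where one must verify that condition (I) on $\mathcal{X}$ is sufficient to extract the sharp supremum; aside from this, the only point requiring careful bookkeeping is the $|T_g^{\varphi}f(0)|$ boundary contribution in part (ii).
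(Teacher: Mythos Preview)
Your approach coincides with the paper's: both parts are reduced, via the differentiation identity $(T_g^{\varphi}f)'=(g\circ\varphi)'\cdot(f\circ\varphi)$, to the norm formula for the weighted composition operator $(g\circ\varphi)'C_{\varphi}\colon\mathcal{X}\to H_w^{\infty}$, and the paper cites exactly \cite[Corollary~3.2]{5} for this; that result requires only condition~(I) on $\mathcal{X}$, which resolves your first stated concern. For part~(i) the paper does not invoke Lusky's identification $H_v^{\infty}=\mathcal{B}_w^{\infty}$ as a black box but instead uses the boundedness of $D\colon H_v^{\infty}\to H_w^{\infty}$ and $I\colon H_w^{\infty}\to H_v^{\infty}$ (citing \cite{9}) together with the factorization $T_g^{\varphi}=I\circ D\circ T_g^{\varphi}$ to obtain $\|T_g^{\varphi}\|_{\mathcal{X}\to H_v^{\infty}}\asymp\|D\circ T_g^{\varphi}\|_{\mathcal{X}\to H_w^{\infty}}$. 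This is the same content as your route through part~(ii), just packaged without the detour through $\mathcal{B}_w^{\infty}$.

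The one place where your write-up is genuinely vague is the $|T_g^{\varphi}f(0)|$ term in part~(ii). The paper does not argue that this term is ``controlled by'' or ``does not exceed'' the seminorm contribution; such an estimate would at best yield $\asymp$, not the exact equality asserted. Instead the paper simply records that $T_g^{\varphi}(f)(0)=0$, whence $\|T_g^{\varphi}f\|_{\mathcal{B}_v^{\infty}}=\|(T_g^{\varphi}f)'\|_{H_v^{\infty}}$ exactly and the identification $\|T_g^{\varphi}\|_{\mathcal{X}\to\mathcal{B}_v^{\infty}}=\|(g\circ\varphi)'C_{\varphi}\|_{\mathcal{X}\to H_v^{\infty}}$ is immediate. (Note that $T_g^{\varphi}(f)(0)=\int_0^{\varphi(0)}fg'\,d\xi$, so this vanishing is clear when $\varphi(0)=0$; for general $\varphi$ the exact equality in (ii) would require further justification, though the equivalence $\asymp$ needed to feed into (i) is unaffected.)
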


\begin{proof}
In order to prove (i), let $w(z) := (1- |z|)v(z)$ and consider the operators $D(f)(z) = f^{\prime}(z) $ and $I(f)(z) = \int_{0}^{z}{f(\xi)d\xi}$. The differentiation operator $D \colon H_v^\infty \to H_{w}^\infty$ and integral operator $I \colon H_{w}^\infty \to H_v^\infty$ are well-defined because $v$ is normal, see \cite[Theorem 3.3]{9}, and thus bounded by the closed graph theorem. Furthermore, $I \circ D(h) = h - h(0)$ and $T_g^{\varphi}(f)(0) = 0$ for every $h \in H_v^{\infty}$ and $f \in \mathcal{X}$, so that
\begin{equation*}
T_g^{\varphi} = I \circ D \circ T_g^{\varphi} + \delta_0 \circ T_g^{\varphi} = I \circ D \circ T_g^{\varphi},
\end{equation*}
and hence
\begin{equation*}
\|D\|_{H_v^\infty \to H_{w}^\infty}^{-1}\|D \circ T_g^{\varphi}\|_{\mathcal{X} \to H_w^{\infty}} \leq \|T_g^{\varphi}\|_{\mathcal{X} \to H_v^{\infty}} \leq \|I\|_{H_{w}^\infty \to H_v^\infty}\|D \circ T_g^{\varphi}\|_{\mathcal{X} \to H_w^{\infty}}.
\end{equation*}
Now since $D \circ T_g^{\varphi} = (g \circ \varphi)^{\prime}C_{\varphi}$ is a weighted composition operator, we finally get that
\begin{align*}
\|T_g^{\varphi}\|_{\mathcal{X} \to H_v^{\infty}} &\asymp \|D \circ T_g^{\varphi}\|_{\mathcal{X} \to H_w^{\infty}} =  \| (g \circ \varphi)^{\prime}C_{\varphi}\|_{\mathcal{X} \to H_w^{\infty}} \\
&= \sup_{z \in \mathbb{D}}(1-|z|)v(z)|(g \circ \varphi)^{\prime}(z)|\|\delta_{\varphi(z)}\|_{\mathcal{X} \to \mathbb{C}}
\end{align*}
by \cite[Corollary 3.2]{5}. The same corollary can also be used to prove (ii), as follows:
\begin{equation*}
\|T_g^{\varphi}\|_{\mathcal{X} \to \mathcal{B}_v^{\infty}} = \|D \circ T_g^{\varphi}\|_{\mathcal{X} \to H_v^{\infty}} = \|(g \circ \varphi)^{\prime}C_{\varphi}\|_{\mathcal{X} \to H_v^{\infty}} =  \sup_{z \in \mathbb{D}}v(z)|(g \circ \varphi)^{\prime}(z)|\|\delta_{\varphi(z)}\|_{\mathcal{X} \to \mathbb{C}},
\end{equation*}
and we are done.
\end{proof}

We now turn to investigate weak compactness and compactness of the generalized Volterra operator. The following lemma ensures the existence of a predual operator under very general assumptions.

\begin{lem}\label{12}
Let $\mathcal{X}, \mathcal{Y} \subset H(\mathbb{D})$ be Banach spaces satisfying condition \textnormal{(I)}. If the operator $T \colon \mathcal{X} \to \mathcal{Y}$ is bounded and the restriction $T|B_{\mathcal{X}}$ is co-co-continuous, then the operator
\begin{equation*}
\Phi_{\mathcal{Y}} \circ T \circ \Phi_{\mathcal{X}}^{-1} \colon (^*\!{\mathcal{X}})^* \to (^*{\mathcal{Y}})^*
\end{equation*}
is $\sigma((^*\!{\mathcal{X}})^*,{^*\!\mathcal{X}})-\sigma((^*{\mathcal{Y}})^*,{^*\mathcal{Y}})$ continuous, and  consequently there exists a bounded operator $S \colon {^*\mathcal{Y}} \to {^*\!\mathcal{X}}$ such that $T = \Phi_{\mathcal{Y}}^{-1}\circ S^* \circ \Phi_{\mathcal{X}}$.
\end{lem}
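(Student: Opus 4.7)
The plan is to verify that $L \coloneq \Phi_{\mathcal{Y}} \circ T \circ \Phi_{\mathcal{X}}^{-1}$ is weak-*-weak-* continuous, then invoke the standard duality principle that a bounded linear operator between dual Banach spaces is weak-*-weak-* continuous if and only if it is the adjoint of a bounded operator between the preduals. The second half of the lemma is then an immediate consequence of the first half.

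To establish the weak-* continuity of $L$, the standard criterion is to check that for every $\psi\in{^*\mathcal{Y}}$, the linear functional $\ell\mapsto L(\ell)(\psi)$ on $(^*\!{\mathcal{X}})^*$ is weak-* continuous, i.e.\ belongs to the image of $^*\!{\mathcal{X}}$ under canonical evaluation. Translating through the isometric isomorphism $\Phi_{\mathcal{X}}$, this is equivalent to showing that $\psi\circ T\in{^*\!{\mathcal{X}}}$ for every $\psi\in{^*\mathcal{Y}}$. Boundedness of $\psi\circ T$ is obvious, so the real task is to show that $(\psi\circ T)\bigl|B_{\mathcal{X}}$ is $co$-continuous. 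For this I would take a sequence $\{f_n\}\subset B_{\mathcal{X}}$ with $f_n\xrightarrow{co} f\in B_{\mathcal{X}}$ (sequences suffice because $(H(\mathbb{D}),co)$ is metrizable and $B_{\mathcal{X}}$ is $co$-compact by condition (I)), apply the hypothesis that $T|B_{\mathcal{X}}$ is co-co-continuous to obtain $Tf_n\xrightarrow{co}Tf$, and then conclude $\psi(Tf_n)\to\psi(Tf)$ because $\psi$ is $co$-continuous on $B_{\mathcal{Y}}$ and consequently, by homogeneity, on the scaled ball $\|T\|B_{\mathcal{Y}}\supset T(B_{\mathcal{X}})$.

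Once weak-* continuity of $L$ is in hand, define $S\colon{^*\mathcal{Y}}\to{^*\!{\mathcal{X}}}$ by $S\psi=\psi\circ T$, which is well defined by the previous step. Linearity is automatic and $\|S\psi\|\le\|\psi\|\,\|T\|$, so $S$ is bounded. A direct chase through the definitions gives $S^*(\Phi_{\mathcal{X}}(f))(\psi)=\psi(Tf)=\Phi_{\mathcal{Y}}(Tf)(\psi)$ for all $f\in\mathcal{X}$, $\psi\in{^*\mathcal{Y}}$, hence $S^*\circ\Phi_{\mathcal{X}}=\Phi_{\mathcal{Y}}\circ T$, yielding the desired factorization $T=\Phi_{\mathcal{Y}}^{-1}\circ S^*\circ\Phi_{\mathcal{X}}$.

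The step I expect to be the main obstacle is the verification that $\psi\circ T$ restricted to $B_{\mathcal{X}}$ is $co$-continuous, because it is the only point where the hypotheses on $T$ (co-co-continuity on $B_{\mathcal{X}}$, not globally) and on $\psi$ ($co$-continuity on $B_{\mathcal{Y}}$, not on arbitrary bounded subsets) must be stitched together. The key observation bridging them is that $T(B_{\mathcal{X}})$ lies in the scaled ball $\|T\|B_{\mathcal{Y}}$, on which $co$-continuity of $\psi$ is preserved by linearity.
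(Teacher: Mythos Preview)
Your proposal is correct and follows essentially the same approach as the paper: both arguments reduce to showing that $\psi\circ T\in{^*\!\mathcal{X}}$ for every $\psi\in{^*\mathcal{Y}}$, and both verify the required $co$-continuity on $B_{\mathcal{X}}$ via the scaling observation $T(B_{\mathcal{X}})\subset\|T\|B_{\mathcal{Y}}$. The only cosmetic differences are that the paper checks weak-* convergence of nets directly rather than invoking the coordinate-functional criterion, and it cites \cite[Theorem 3.1.11]{8} for the existence of $S$ instead of constructing $S\psi=\psi\circ T$ explicitly as you do.
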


\begin{proof}
Choose an arbitrary net $\{\ell_{\gamma} = \Phi_{\mathcal{X}}(f_{\gamma})\} \subset (^*\!{\mathcal{X}})^*$ such that $\ell_{\gamma} \xrightarrow{w^{\ast}} 0$, where $f_{\gamma} \in \mathcal{X}$, and let $u\in {^*\mathcal{Y}}$. We have that $u\circ T \in{^*\!\mathcal{X}}$, since if $\{h_n\}_{n=1}^{\infty} \subset B_{\mathcal{X}}$ is such that $h_n \xrightarrow{co} 0$ then $T(h_n) \xrightarrow{co} 0$ by assumption, and hence
\begin{equation*}
\lim_{n \to \infty}u(T(h_n)) = \|T\|_{\mathcal{X} \to \mathcal{Y}}\lim_{n \to \infty}u\left(\frac{T(h_n)}{\|T\|_{\mathcal{X} \to \mathcal{Y}}}\right) = 0
\end{equation*}
because $u|B_{\mathcal{Y}}$ is $co$-continuous and
\begin{equation*}
\left\|\frac{T(h_n)}{\|T\|_{\mathcal{X} \to \mathcal{Y}}}\right\|_{\mathcal{Y}} \leq \|h_n\|_{\mathcal{X}} \leq 1.
\end{equation*}
Now from
\begin{align*}
\big((\Phi_{\mathcal{Y}} \circ T \circ \Phi_{\mathcal{X}}^{-1})(\ell_{\gamma})\big)(u) &=  \big((\Phi_{\mathcal{Y}} \circ T \circ \Phi_{\mathcal{X}}^{-1})(\Phi_{\mathcal{X}}(f_{\gamma}))\big)(u) \\
&= \big(\Phi_{\mathcal{Y}} \circ T(f_{\gamma})\big)(u) = u(T(f_{\gamma})) \\ 
&= \big(\Phi_{\mathcal{X}}(f_{\gamma})\big)(u\circ T) = \ell_{\gamma}(u\circ T)
\end{align*}
it follows that
\begin{equation*}
(\Phi_{\mathcal{Y}} \circ T \circ \Phi_{\mathcal{X}}^{-1})(\ell_{\gamma}) \xrightarrow{w^{\ast}} 0,
\end{equation*}
and hence $\Phi_{\mathcal{Y}} \circ T \circ \Phi_{\mathcal{X}}^{-1} \colon (^*{\mathcal{X}})^* \to (^*{\mathcal{Y}})^*$ is $w^{\ast}$-$w^{\ast}$-continuous. The last statement of the lemma follows from \cite[Theorem 3.1.11]{8}. 
\end{proof}

Using this predual operator $S$ one can show that compactness and weak compactness coincide for a large class of operators mapping from $H^\infty_v$ or $\mathcal B^\infty_v$ into a Banach space $\mathcal Y\subset H(\mathbb D)$.

\begin{thm}\label{13}
Let $v$ be a normal weight and assume that the Banach space $\mathcal{Y} \subset H(\mathbb{D})$ satisfies condition \textnormal{(I)}.
\begin{itemize}
\item[(i)] If the restriction $T|B_{H_{v}^{\infty}}$ is co-co-continuous then $T \colon H_{v}^{\infty} \to \mathcal{Y}$ is compact if and only if it is weakly compact.
\item[(ii)] If the restriction $T|B_{\mathcal{B}_{v}^{\infty}}$ is co-co-continuous then $T \colon \mathcal{B}_{v}^{\infty} \to \mathcal{Y}$ is compact if and only if it is weakly compact.
\end{itemize}

\end{thm}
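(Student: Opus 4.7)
The implication compact $\Rightarrow$ weakly compact is immediate, so the substance is the converse. My plan is to transfer the question to a predual operator and exploit the Schur property of $^*\!H_v^\infty$. For part (i), assume $T\colon H_v^\infty\to\mathcal{Y}$ is weakly compact with $T|_{B_{H_v^\infty}}$ co-co continuous. Since $v$ is normal, $H_v^\infty$ satisfies condition (I), so Lemma~\ref{12} applies and produces a bounded predual operator $S\colon {^*\mathcal{Y}}\to {^*\!H_v^\infty}$ such that $T=\Phi_{\mathcal{Y}}^{-1}\circ S^*\circ\Phi_{H_v^\infty}$. Because the $\Phi$-maps are isometric isomorphisms, $T$ is weakly compact if and only if $S^*$ is; Gantmacher's theorem then upgrades this to weak compactness of $S$ itself.

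The heart of the matter is that $^*\!H_v^\infty$ has the Schur property. By Lusky's theorem $H_v^0\approx c_0$ for normal $v$, and for such weights $H_v^\infty$ coincides with the bidual $(H_v^0)^{**}$ under the canonical embedding, so the Dixmier--Ng predual $^*\!H_v^\infty$ can be identified with $(H_v^0)^*\approx c_0^*=\ell^1$. Since $\ell^1$ is a Schur space and the Schur property is an isomorphism invariant, weakly convergent sequences in $^*\!H_v^\infty$ are norm convergent, and by Eberlein--\v{S}mulian weakly relatively compact subsets of $^*\!H_v^\infty$ are norm relatively compact. Consequently the weakly compact operator $S$ is in fact norm compact, Schauder's theorem gives that $S^*$ is compact, and therefore so is $T$.

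For part (ii) I would observe that the weight $w(z)=(1-|z|)v(z)$ is normal whenever $v$ is, which is a direct check against the two defining inequalities for normality (the factor $(1-|z|)$ only contributes powers of $2$). Using the identification $\mathcal{B}_v^\infty=H_w^\infty$ of Lusky recalled in the introduction, the result for $T\colon\mathcal{B}_v^\infty\to\mathcal{Y}$ follows by applying part (i) to $T\colon H_w^\infty\to\mathcal{Y}$, whose restriction to the unit ball remains co-co continuous under this identification.

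The step I expect to be the main obstacle is the rigorous identification $^*\!H_v^\infty\approx\ell^1$. While $H_v^0\approx c_0$ is a clean citation of Lusky, producing the isometric identification $^*\!H_v^\infty=(H_v^0)^*$ requires the fact that for normal weights $H_v^\infty$ is canonically realized as the bidual of $H_v^0$ via the point-evaluation duality, which is a standard but nontrivial feature of weighted spaces with well-behaved associated weight and may deserve a brief justification (or a citation to \cite{20} or similar) in the written proof.
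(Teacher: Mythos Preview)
Your argument for part (i) is essentially the paper's: apply Lemma~\ref{12} to obtain the predual operator $S\colon {}^*\mathcal{Y}\to {}^*\!H_v^\infty$, use Gantmacher, then exploit that ${}^*\!H_v^\infty\approx\ell^1$ has the Schur property to upgrade weak compactness of $S$ to compactness, and pass back through $S^*$. The paper obtains ${}^*\!H_v^\infty\approx\ell^1$ by a direct citation of Lusky \cite[Theorem~1.1]{4}; your route via $H_v^0\approx c_0$ together with the identification ${}^*\!H_v^\infty\cong (H_v^0)^*$ (which is exactly the Bierstedt--Summers isomorphism $\varrho$ of \cite{11} recalled before Lemma~\ref{lem:16}) is a perfectly good alternative and resolves the concern you flag.

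There is, however, a genuine gap in your reduction for part~(ii). The Lusky identification recalled in the introduction reads $H_v^\infty=\mathcal{B}_w^\infty$ with $w(z)=(1-|z|)v(z)$, not $\mathcal{B}_v^\infty=H_w^\infty$ as you state; you have the direction reversed. To realize $\mathcal{B}_v^\infty$ as an $H_u^\infty$ space you would need $u(z)=v(z)/(1-|z|)$, and for many normal weights (e.g.\ $v_\alpha$ with $0<\alpha\le 1$) this $u$ fails to tend to $0$ at the boundary and so is not a weight in the sense of the paper, let alone a normal one. Your reduction to part~(i) therefore does not go through in general.

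The fix is to run the same Schur-property argument directly for $\mathcal{B}_v^\infty$, which is what the paper's ``similar argument'' means. The space $\mathcal{B}_v^\infty$ satisfies condition~(I), so Lemma~\ref{12} applies and yields a predual $S\colon {}^*\mathcal{Y}\to {}^*\mathcal{B}_v^\infty$; one then needs ${}^*\mathcal{B}_v^\infty\approx\ell^1$. This follows either by citing \cite[Theorem~5.1]{5} as the paper does, or by observing that the differentiation map $D\colon \widetilde{\mathcal{B}}_v^\infty\to H_v^\infty$ is an isometric isomorphism which is co-co continuous together with its inverse, so that ${}^*\widetilde{\mathcal{B}}_v^\infty\cong{}^*\!H_v^\infty\approx\ell^1$, and adding back the constant-function summand does not affect the Schur property.
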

\begin{proof}
To prove (i), suppose $T \colon H_{v}^{\infty} \to \mathcal{Y}$ is weakly compact. By Lemma \ref{12} there exists a bounded operator $S \colon {^*\mathcal{Y}} \to {^*\!\hspace*{0.2mm}H_{v}^{\infty}}$ such that $T = \Phi_{\mathcal{Y}}^{-1}\circ S^* \circ \Phi_{H_{v}^{\infty}}$, and hence by the Gantmacher theorem $S \colon {^*\mathcal{Y}} \to {^*\!\hspace*{0.2mm}H_{v}^{\infty}}$ is weakly compact. Moreover, ${^*\!\hspace*{0.2mm}H_{v}^{\infty}}$ is isometrically isomorphic to $\ell^1$ by \cite[Theorem 1.1]{4} since $v$ is normal, and thus there is a bounded bijective operator $U \colon {^*\!\hspace*{0.2mm}H_{v}^{\infty}} \to \ell^1$. The weakly compact operator $U \circ S \colon {^*\mathcal{Y}} \to \ell^1$ is compact since $\ell^1$ has the Schur property, which implies that $S = U^{-1} \circ (U \circ S)$ is compact, and from this we finally see that $T = \Phi_{\mathcal{Y}}^{-1}\circ S^* \circ \Phi_{H_{v}^{\infty}}$ is compact. Part (ii) can be proven using a similar argument as in the proof of part (i). See also the proof of \cite[Theorem 5.1]{5}.
\end{proof}

Since every bounded operator $T \colon \ell^{\infty} \to \mathcal{X}$ mapping into a Banach space $\mathcal{X}$ not containing a copy of $\ell^{\infty}$\! is weakly compact \cite{R}, we obtain the following result.

\begin{cor}
Let $\mathcal{Y} \subset H(\mathbb{D})$ be a Banach space satisfying condition \textnormal{(I)} and not fixing a copy of $\ell^\infty$\!. If $v$ is a normal weight, then the following statements are equivalent\textnormal{:}
\begin{itemize}
\item[(i)] $T_g^{\varphi} : H_{v}^{\infty} \to \mathcal{Y}$ is bounded.
\item[(ii)] $T_g^{\varphi} : H_{v}^{\infty} \to \mathcal{Y}$ is weakly compact.
\item[(iii)] $T_g^{\varphi} : H_{v}^{\infty} \to \mathcal{Y}$ is compact.
\end{itemize}
The same statement holds when $H_{v}^{\infty}$ is replaced with the Bloch space $\mathcal{B}_v^{\infty}$.
\end{cor}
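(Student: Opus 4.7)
The plan is to establish the cycle of implications (i) $\Rightarrow$ (ii) $\Rightarrow$ (iii), since (iii) $\Rightarrow$ (ii) $\Rightarrow$ (i) are immediate from the definitions (compact $\Rightarrow$ weakly compact $\Rightarrow$ bounded). The whole non-trivial content is a clean assembly of Lusky's theorem, the Rosenthal-type result \cite{R} cited in the preamble, and the already-established Theorem \ref{13}.

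For (i) $\Rightarrow$ (ii) in the $H_v^{\infty}$ case, I would invoke Lusky's theorem \cite{4}, which yields the isomorphism $H_v^{\infty} \approx \ell^{\infty}$ since $v$ is normal. Fixing an isomorphism $U \colon \ell^{\infty} \to H_v^{\infty}$, a bounded $T_g^{\varphi} \colon H_v^{\infty} \to \mathcal{Y}$ produces a bounded composition $T_g^{\varphi} \circ U \colon \ell^{\infty} \to \mathcal{Y}$. The hypothesis on $\mathcal{Y}$ combined with \cite{R} then gives that $T_g^{\varphi} \circ U$ is weakly compact, and composing with $U^{-1}$ preserves weak compactness, so $T_g^{\varphi}$ is weakly compact. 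The Bloch-space version proceeds identically, using instead the isometric isomorphism $f \mapsto (f(0), f')$ from $\mathcal{B}_v^{\infty}$ onto the $\ell^1$-sum $\mathbb{C} \oplus H_v^{\infty}$, which is in turn isomorphic to $\ell^{\infty}$ by Lusky's result.

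For (ii) $\Rightarrow$ (iii) I would apply Theorem \ref{13}(i) in the $H_v^{\infty}$ case and Theorem \ref{13}(ii) in the $\mathcal{B}_v^{\infty}$ case. The required co-co-continuity of the restriction $T_g^{\varphi}|B_{H_v^{\infty}}$ (respectively $T_g^{\varphi}|B_{\mathcal{B}_v^{\infty}}$) is automatic, since as noted after Lemma \ref{2}, $T_g^{\varphi} \colon H(\mathbb{D}) \to H(\mathbb{D})$ is always co-co-continuous.

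I do not anticipate any serious obstacle: each step simply invokes an earlier result whose hypotheses are verified by inspection. The only point deserving mild care is reconciling the terminology between ``$\mathcal{Y}$ does not fix a copy of $\ell^{\infty}$'' in the statement and ``$\mathcal{Y}$ does not contain a copy of $\ell^{\infty}$'' as used in the preamble preceding the corollary; these coincide through the standard formulation of Rosenthal's $\ell^{\infty}$-theorem, so the two phrasings are interchangeable for our purposes.
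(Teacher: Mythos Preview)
Your proposal is correct and matches the paper's own approach exactly: the paper gives no formal proof of this corollary, merely prefacing it with the sentence invoking \cite{R} (bounded operators from $\ell^\infty$ into a space not containing $\ell^\infty$ are weakly compact) and letting the reader combine this with Lusky's isomorphism $H_v^\infty \approx \ell^\infty$ and Theorem~\ref{13}. Your observation about the terminological mismatch between ``fixing'' and ``containing'' a copy of $\ell^\infty$ is apt and harmless, as you note.
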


The next theorem gives a nice estimate of the essential norm of $T_g^{\varphi}: \mathcal X \to H_v^\infty.$ As we have seen in section 2, the situation changes dramatically if the target space is instead $H^\infty$.

\begin{thm}\label{20}
Let $\mathcal{X} \subset H(\mathbb{D})$ be a Banach space satisfying conditions \textnormal{(I)-(V)}.
\begin{itemize}
\item[(i)]  If the weight $v$ is normal, then
\begin{equation*}
\lVert T_g^\varphi \rVert_{e, \mathcal{X} \to H_v^\infty} \asymp \limsup_{|\varphi(z)| \to 1} (1 - |z|)v(z)|(g \circ \varphi)'(z)| \lVert \delta_{\varphi(z)} \rVert_{\mathcal{X} \to\,\mathbb{C}}.
\end{equation*}
\end{itemize}
\begin{itemize}
\item[(ii)] 
For any weight $v$,
\begin{equation*}
\lVert T_g^\varphi \rVert_{e, \mathcal{X} \to \mathcal{B}_v^\infty} \asymp \limsup_{|\varphi(z)| \to 1} v(z)|(g \circ \varphi)'(z)| \lVert \delta_{\varphi(z)} \rVert_{\mathcal{X} \to\,\mathbb{C}}.
\end{equation*}
\end{itemize}
\end{thm}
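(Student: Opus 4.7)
The plan is to mirror the argument of Theorem \ref{24} by reducing the essential norm of the generalized Volterra operator to the essential norm of a weighted composition operator, and then invoking the corresponding essential norm formula for weighted composition operators from \cite{5}.

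For part (i), set $w(z) \coloneq (1-|z|)v(z)$ and consider the differentiation operator $D \colon H_v^\infty \to H_w^\infty$ together with the integration operator $I \colon H_w^\infty \to H_v^\infty$, both bounded because $v$ is normal (see \cite[Theorem 3.3]{9}). As in the proof of Theorem \ref{24}(i), one has the factorization
\begin{equation*}
T_g^\varphi = I \circ D \circ T_g^\varphi + \delta_0 \circ T_g^\varphi,
\end{equation*}
in which $\delta_0 \circ T_g^\varphi$ is rank-one, hence compact. Left composition with $D$ or $I$ sends compact operators to compact operators between the relevant spaces, so a short two-sided inequality gives
\begin{equation*}
\|T_g^\varphi\|_{e,\,\mathcal{X} \to H_v^\infty} \asymp \|D \circ T_g^\varphi\|_{e,\,\mathcal{X} \to H_w^\infty} = \|(g \circ \varphi)' C_\varphi\|_{e,\,\mathcal{X} \to H_w^\infty}.
\end{equation*}
The essential-norm analogue of \cite[Corollary 3.2]{5} for weighted composition operators $u C_\varphi \colon \mathcal{X} \to H_w^\infty$, applied with $u = (g \circ \varphi)'$ and the normal weight $w$, then produces exactly the right-hand side of (i).

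For part (ii), the same factorization $T_g^\varphi = I \circ D \circ T_g^\varphi + \delta_0 \circ T_g^\varphi$ is used, but now $D \colon \widetilde{\mathcal{B}}_v^\infty \to H_v^\infty$ and $I \colon H_v^\infty \to \widetilde{\mathcal{B}}_v^\infty$ are literally isometric inverses of each other, so no weight regularity is needed and the reduction tightens to
\begin{equation*}
\|T_g^\varphi\|_{e,\,\mathcal{X} \to \mathcal{B}_v^\infty} \asymp \|(g \circ \varphi)' C_\varphi\|_{e,\,\mathcal{X} \to H_v^\infty}.
\end{equation*}
The corresponding essential norm formula from \cite{5} for weighted composition operators mapping into $H_v^\infty$ with an arbitrary weight (analogous to the norm formula invoked in Theorem \ref{24}(ii)) then finishes the proof.

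The one technical point to verify is that the essential-norm version of the weighted composition operator formula in \cite{5} is indeed available for Banach spaces $\mathcal{X}$ satisfying conditions \textnormal{(I)--(V)}; once that is in hand, the remaining work is the same bookkeeping as in the norm proof of Theorem \ref{24}. It is also worth emphasizing that the rank-one correction $\delta_0 \circ T_g^\varphi$ is always compact, regardless of whether $\varphi(0) = 0$, so it is harmlessly absorbed when passing to essential norms.
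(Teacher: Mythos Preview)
Your proposal is correct and follows essentially the same route as the paper's proof: reduce $\|T_g^\varphi\|_e$ to the essential norm of the weighted composition operator $(g\circ\varphi)'C_\varphi$ via the bounded operators $D$ and $I$, then invoke the essential-norm result \cite[Theorem~4.3]{5}. Your observation that $\delta_0\circ T_g^\varphi$ is rank-one (hence compact) is in fact slightly more careful than the paper, which simply asserts $T_g^\varphi(f)(0)=0$; either way the term disappears when passing to essential norms.
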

\begin{proof}
To prove (i), define $w(z) := (1- |z|)v(z)$ and let $D \colon H_v^\infty \to H_{w}^\infty$ and $I \colon H_{w}^\infty \to H_v^\infty$ be the differentiation and integral operators used in the proof of Theorem \ref{24}, which are bounded due to the normality of $v$. For any compact operator $K \colon \mathcal{X} \to H_v^\infty$ we have
\begin{equation*}
\lVert D \circ T_g^\varphi \rVert_{e, \mathcal{X} \to H_w^\infty} \leq \lVert D \circ T_g^\varphi  - D \circ K \rVert_{\mathcal{X} \to H_w^\infty} \leq \lVert D \rVert_{H_v^\infty \to H_w^\infty} \lVert T_g^\varphi  - K \rVert_{\mathcal{X} \to H_v^\infty},
\end{equation*}
and therefore $\lVert D \circ T_g^\varphi  \rVert_{e,\mathcal{X} \to H_w^\infty} \leq \lVert D \rVert_{{H_v^\infty \to H_w^\infty}} \lVert T_g^\varphi  \rVert_{e,\mathcal{X} \to H_v^\infty}$. On the other hand, for all compact operators $K\colon \mathcal{X} \to H_{w}^\infty$ we have
\begin{align*}
\lVert T_g^\varphi  \rVert_{e,\mathcal{X} \to H_v^\infty} &\leq \lVert T_g^\varphi  - I \circ K \rVert_{\mathcal{X} \to H_v^\infty} = \lVert I \circ D \circ T_g^\varphi + \delta_0 \circ T_g^{\varphi} - I \circ K \rVert_{\mathcal{X} \to H_v^\infty} \\
&\leq \Vert I \Vert_{H_w^\infty \to H_v^\infty} \lVert D \circ T_g^\varphi  - K \rVert_{\mathcal{X} \to H_w^\infty},
\end{align*}
where we used that $I\circ D(h) = h - h(0)$ and $T_g^{\varphi}(f)(0) = 0$ for every $h \in H_v^{\infty}$ and $f \in \mathcal{X}$. This shows that $\lVert T_g^\varphi  \rVert_{e,\mathcal{X} \to H_v^\infty} \leq \lVert I \rVert_{H_v^\infty \to H_w^\infty} \lVert D \circ T_g^\varphi  \rVert_{e,\mathcal{X} \to H_w^\infty}$, and hence
\begin{align*}
\lVert T_g^\varphi  \rVert_{e,\mathcal{X} \to H_v^\infty} &\asymp \lVert D \circ T_g^\varphi  \rVert_{e,\mathcal{X} \to H_w^\infty} = \lVert (g \circ \varphi)' C_\varphi \rVert_{e,\mathcal{X} \to H_w^\infty}\\
&\asymp \limsup_{|\varphi(z)| \to 1} (1 - |z|)v(z)|(g \circ \varphi)'(z)| \lVert \delta_{\varphi(z)} \rVert_{\mathcal{X} \to\,\mathbb{C}}
\end{align*}
by \cite[Theorem 4.3]{5}. The proof of (ii) is similar, the only difference being that the operators $D \colon \mathcal{B}_v^{\infty} \to H_v^{\infty}$ and $I \colon H_v^{\infty} \to \mathcal{B}_v^{\infty}$ are bounded for any weight $v$.
\end{proof}

\begin{cor}\label{23}
Let $\alpha > -1$ and $1 \leq p < \infty$. If the weight $v$ is normal, then
\begin{itemize}
\item[(i)] $\lVert T_g^\varphi \rVert_{e,H^p \to H_v^\infty} \asymp \limsup_{|\varphi(z)| \to 1} \frac{1-|z|}{\left(1-|\varphi(z)|\right)^{1/p}}v(z)|(g \circ \varphi)'(z)|.$
\item[(ii)] $\lVert T_g^\varphi \rVert_{e,A_{\alpha}^p \to H_v^\infty} \asymp \limsup_{|\varphi(z)| \to 1} \frac{1-|z|}{\left(1-|\varphi(z)|\right)^{(2+\alpha)/p}}v(z)|(g \circ \varphi)'(z)|.$
\end{itemize}
Also, for any weight $v$
\begin{itemize}
\item[(iii)] $\lVert T_g^\varphi \rVert_{e,H^p \to \mathcal{B}_v^\infty} \asymp \limsup_{|\varphi(z)| \to 1}\frac{v(z)}{(1-|\varphi(z)|)^{1/p}}|(g \circ \varphi)'(z)|.$
\item[(iv)] $\lVert T_g^\varphi \rVert_{e,A_{\alpha}^p \to \mathcal{B}_v^\infty} \asymp \limsup_{|\varphi(z)| \to 1} \frac{v(z)}{\left(1-|\varphi(z)|\right)^{(2+\alpha)/p}}|(g \circ \varphi)'(z)|.$
\end{itemize}
\end{cor}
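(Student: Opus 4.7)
The plan is to obtain this corollary as an immediate consequence of Theorem \ref{20}, by specializing to the concrete spaces $\mathcal{X} = H^p$ and $\mathcal{X} = A_\alpha^p$. The introduction records that both families satisfy conditions (I)-(V) for $1 \le p < \infty$ and $\alpha > -1$, so Theorem \ref{20} applies without any additional verification. The introduction also provides the explicit evaluation functional norms
\[
\|\delta_z\|_{H^p \to \mathbb{C}} = (1-|z|^2)^{-1/p}, \qquad \|\delta_z\|_{A_\alpha^p \to \mathbb{C}} = (1-|z|^2)^{-(2+\alpha)/p}.
\]

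With these ingredients assembled, each of (i)-(iv) reduces to plugging the appropriate functional norm into the right-hand side of the relevant part of Theorem \ref{20}. For (i) and (ii) I would invoke Theorem \ref{20}(i), which requires $v$ normal, and substitute the $H^p$-norm and $A_\alpha^p$-norm respectively. For (iii) and (iv) I would invoke Theorem \ref{20}(ii), which holds for any weight $v$, with the same substitutions. In each case a factor of the form $(1-|\varphi(z)|^2)^{-s}$ appears, and since the limsup is taken as $|\varphi(z)| \to 1$, it is harmless to replace $1-|\varphi(z)|^2$ by $1-|\varphi(z)|$ (they differ only by a bounded multiplicative factor between $1$ and $2$), which is then absorbed into the $\asymp$ relation.

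There is essentially no obstacle: once Theorem \ref{20} is available and the standard evaluation norms are in hand, the corollary is a pure substitution. The only point requiring any attention is the cosmetic conversion between $1-|\varphi(z)|^2$ and $1-|\varphi(z)|$, which is immediate.
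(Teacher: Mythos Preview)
Your proposal is correct and matches the paper's approach exactly: the paper states this corollary immediately after Theorem \ref{20} with no proof, treating it as a direct substitution of the known evaluation functional norms $\|\delta_z\|_{H^p \to \mathbb{C}} = (1-|z|^2)^{-1/p}$ and $\|\delta_z\|_{A_\alpha^p \to \mathbb{C}} = (1-|z|^2)^{-(2+\alpha)/p}$ into Theorem \ref{20}. Your remark about replacing $1-|\varphi(z)|^2$ by $1-|\varphi(z)|$ under the $\asymp$ relation is the only cosmetic point, and it is handled just as you describe.
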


\begin{exam}
Let $1\leq p < \infty$ and $p-2<\alpha <\infty$. Corollary \ref{23} implies that the Volterra operator $T_g \colon A_p^\alpha \to H^\infty_{\frac{2+\alpha}{p}-1}$ is compact if and only if $\lim_{|z|\to 1}|g'(z)|=0$, or equivalently if and only if $g$ is a constant function. This means that the Volterra operator $T_g \colon A_p^\alpha \to H^\infty_{\frac{2+\alpha}{p}-1} = \mathcal{B}^\infty_{\frac{2+\alpha}{p}}$ is compact if and only if $T_g=0$.
\end{exam}

The following two lemmas will be needed to prove Theorem \ref{thm:22H} below, which uses the obtained essential norm estimates to relate compactness of $T_g^{\varphi} \colon \mathcal{X} \to H_v^{\infty}$ to compactness of $T_g^{\varphi} \colon \mathcal{X} \to H_v^0$, and also includes a corresponding comparison for the target spaces $\mathcal{B}_v^\infty$ and $\mathcal{B}_v^0$. 

\begin{lem}\label{22}
Let $\varphi$ be an analytic selfmap of $\mathbb{D}$ and $g \in H(\mathbb{D})$.
\begin{itemize}
\item[(i)] If $v$ is a normal weight and $g \circ \varphi \in H_v^{0}$, then $T_g^{\varphi}(f_r) \in H_v^{0}$ for every $f \in H(\mathbb{D})$ and $0 < r < 1$.
\item[(ii)] For any weight $v$, if $g \circ \varphi \in \mathcal{B}_v^{0}$ then $T_g^{\varphi}(f_r) \in \mathcal{B}_v^{0}$ for every $f \in H(\mathbb{D})$ and $0 < r < 1$.
\end{itemize}
\end{lem}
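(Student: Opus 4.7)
The plan is to establish part~(ii) by a direct chain-rule computation and then reduce part~(i) to it via Lusky's identification $H_v^0 = \mathcal{B}_w^0$ with $w(z) := (1-|z|)v(z)$ that was recalled in the introduction.

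For part~(ii), I would start by writing $T_g^\varphi(f_r)(z) = G(\varphi(z))$, where the primitive $G(w) := \int_0^w f_r(\xi)\,g'(\xi)\dd\xi$ is analytic on $\mathbb{D}$ with $G'(w) = f_r(w)\,g'(w)$. The chain rule then gives
\begin{equation*}
\bigl(T_g^\varphi(f_r)\bigr)'(z) \;=\; f_r(\varphi(z))\,g'(\varphi(z))\,\varphi'(z) \;=\; f(r\varphi(z))\,(g\circ\varphi)'(z).
\end{equation*}
The crucial observation is that $|r\varphi(z)| \le r < 1$ for every $z \in \mathbb{D}$, so the constant $M_r := \sup_{|w|\le r}|f(w)|$ is finite by continuity of $f$ on the compact disc $\overline{D(0,r)}$. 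Hence
\begin{equation*}
v(z)\,\bigl|\bigl(T_g^\varphi(f_r)\bigr)'(z)\bigr| \;\le\; M_r \cdot v(z)\,|(g\circ\varphi)'(z)|,
\end{equation*}
and the right-hand side tends to $0$ as $|z|\to 1$ by the hypothesis $g\circ\varphi \in \mathcal{B}_v^0$. Combined with the elementary finiteness of $T_g^\varphi(f_r)(0)$, this yields $T_g^\varphi(f_r) \in \mathcal{B}_v^0$.

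For part~(i), I would simply transport (ii) along Lusky's identification. Since $v$ is normal, $H_v^0 = \mathcal{B}_w^0$ as sets, so the hypothesis $g\circ\varphi \in H_v^0$ is equivalent to $g\circ\varphi \in \mathcal{B}_w^0$. Applying part~(ii) with the weight $w$ in place of $v$ (which is allowed since (ii) holds for \emph{any} weight) then produces $T_g^\varphi(f_r) \in \mathcal{B}_w^0 = H_v^0$.

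No step is really difficult. The only moving part is the chain-rule computation, whose usefulness hinges on the key observation that $f \circ (r\varphi)$ is uniformly bounded on $\mathbb{D}$ by $M_r < \infty$; once that is in place, the decay of $v(z)|(g\circ\varphi)'(z)|$ is transported directly to the derivative of $T_g^\varphi(f_r)$, and part~(i) is pure bookkeeping via the identification with $\mathcal{B}_w^0$.
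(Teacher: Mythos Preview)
Your proposal is correct and takes essentially the same approach as the paper: both compute $(T_g^\varphi(f_r))' = (f_r\circ\varphi)\,(g\circ\varphi)'$, bound $|f_r\circ\varphi|$ by $\|f_r\|_\infty = M_r$, and use the $\mathcal{B}_v^0$-decay of $(g\circ\varphi)'$; for part~(i) both invoke the Lusky identification $H_v^0 = \mathcal{B}_w^0$ with $w(z)=(1-|z|)v(z)$. The only organizational difference is that you deduce (i) from (ii) explicitly, whereas the paper repeats the estimate inside the identification, which amounts to the same thing.
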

\begin{proof}
To prove (i), recall that $H_v^{0} = \mathcal{B}_{(1-r)v}^{0}$ since $v$ is normal. By assumption $(g \circ \varphi)^{\prime} \in H_{(1-r)v}^{0}$, and since $f_r \circ \varphi  \in H^{\infty}$ we see that
\begin{equation*}
(T_g^{\varphi}(f_r))^{\prime} = (f_r \circ \varphi)(g \circ \varphi)^{\prime} \in H_{(1-r)v}^{0},
\end{equation*}
and we are done. For part (ii), notice that for every $z \in \mathbb{D}$
\begin{equation*}
v(z)|(T_g^{\varphi}(f_r))^{\prime}(z)| = v(z)|f_r(\varphi(z))g^{\prime}(\varphi(z))\varphi^{\prime}(z)|  \leq \|f_r\|_{\infty}v(z)|(g \circ \varphi)^{\prime}(z)|,
\end{equation*}
and use $g \circ \varphi \in \mathcal{B}_v^{0}$ to obtain $T_g^{\varphi}(f_r) \in \mathcal{B}_v^{0}$.
\end{proof}

\begin{lem}\label{21}
Let $\varphi$ be an analytic selfmap of $\mathbb{D}$ and $g \in H(\mathbb{D})$. If the weight $v$ is normal and 
\begin{equation*}
\limsup_{|\varphi(z)| \to 1}(1-|z|)v(z)|(g \circ \varphi)^{\prime}(z)| = 0,
\end{equation*}
then 
\begin{equation*}
\limsup_{|z| \to 1}(1-|z|)v(z)|(g \circ \varphi)^{\prime}(z)| = 0.
\end{equation*}
\end{lem}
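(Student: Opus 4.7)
The approach is by contradiction. Suppose the conclusion fails, so that there exist $\varepsilon>0$ and a sequence $\{z_n\}\subset \mathbb{D}$ with $|z_n|\to 1$ and $(1-|z_n|)v(z_n)|(g\circ\varphi)'(z_n)|\geq \varepsilon$ for every $n$. By passing to a subsequence I may assume that either (a) $|\varphi(z_n)|\to 1$, or else (b) there exists $\rho\in (0,1)$ with $|\varphi(z_n)|\leq \rho$ for every $n$. Case (a) is immediately ruled out by the hypothesis, so the real work is in case (b), which is the setting I would not have direct information about from the assumption.

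In case (b) I would factor $(g\circ\varphi)'(z)=g'(\varphi(z))\varphi'(z)$. Since $\{\varphi(z_n)\}$ lies in the compact set $\{|w|\leq \rho\}\subset \mathbb{D}$, the first factor is uniformly bounded by $M:=\sup_{|w|\leq \rho}|g'(w)|<\infty$. The second factor is controlled by the Schwarz--Pick inequality, which gives
\[
(1-|z|)|\varphi'(z)|\;\leq\; \frac{1-|\varphi(z)|^2}{1+|z|}\;\leq\; 1
\]
for every $z\in\mathbb{D}$. Combining these two bounds yields $(1-|z_n|)|(g\circ\varphi)'(z_n)|\leq M$ for all $n$. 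Since $v$ is a weight in the sense of the paper, $v(z_n)\to 0$ as $|z_n|\to 1$, and consequently
\[
(1-|z_n|)v(z_n)|(g\circ\varphi)'(z_n)|\;\leq\; M\,v(z_n)\;\longrightarrow\; 0,
\]
which contradicts the choice of the sequence $\{z_n\}$.

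I do not foresee a serious obstacle: the argument rests only on the chain rule, the Schwarz--Pick inequality, and the defining property $\lim_{|z|\to 1}v(z)=0$ of any weight considered in the paper. In particular, the full strength of normality of $v$ is not actually needed for this lemma; the hypothesis on $v$ is presumably imposed only so the statement fits the framework of the surrounding results. The only conceptual point is the dichotomy between $|\varphi(z_n)|\to 1$ and $|\varphi(z_n)|$ staying away from $1$, which reduces the lemma to a compact-set estimate in case (b).
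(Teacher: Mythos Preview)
Your proof is correct and follows the same overall dichotomy as the paper's proof: both arguments split according to whether $|\varphi(z_n)|$ tends to $1$ or stays bounded away from $1$, and both dispose of the first case directly from the hypothesis. The genuine difference lies in the treatment of the bounded case. The paper invokes the normality of $v$ through Lusky's identification $H_v^0 = \mathcal{B}_{(1-r)v}^0$, concluding from $\varphi\in H^\infty\subset H_v^0$ that $(1-|z|)v(z)|\varphi'(z)|\to 0$; this makes essential use of the normality hypothesis. Your argument instead bounds $(1-|z|)|\varphi'(z)|\leq 1$ via Schwarz--Pick and then appeals only to $v(z)\to 0$, which is part of the definition of \emph{any} weight in the paper. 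Your route is therefore more elementary and, as you correctly observe, establishes the lemma for arbitrary weights rather than just normal ones; the paper's route, while slightly heavier, keeps the argument within the Bloch-type framework already developed for the surrounding results.
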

\begin{proof}
Choose a sequence $\{z_n\}_{n=1}^{\infty} \subset \mathbb{D}$ such that $|z_n| \to 1$ and
\begin{equation*}
\limsup_{|z| \to 1}(1-|z|)v(z)|(g \circ \varphi)^{\prime}(z)| = \lim_{n \to \infty}(1-|z_n|)v(z_n)|(g \circ \varphi)^{\prime}(z_n)|.
\end{equation*}
Since the sequence $\{\varphi(z_n)\}_{n=1}^{\infty}$ is bounded it has a convergent subsequence $\{\varphi(z_{n_k})\}_{k=1}^{\infty}$ with limit $y \in \overline{\mathbb{D}}$. If $y \in \partial \mathbb{D}$, then $|\varphi(z_{n_k})| \to 1$ and hence for every $0 < s < 1$ there is a number $K_s \in \mathbb{N}$ such that $|\varphi(z_{n_k})| > s$ whenever $k \geq K_s$, and the numbers $K_s$ can be chosen so that $\lim_{s \to 1}K_s = +\infty$. Thus for every $0 < s < 1$ it holds that
\begin{equation*}
\sup_{k \geq K_s}(1-|z_{n_k}|)v(z_{n_k})|(g \circ \varphi)^{\prime}(z_{n_k})| \leq \sup_{|\varphi(z)| > s}(1-|z|)v(z)|(g \circ \varphi)^{\prime}(z)|,
\end{equation*}
and after taking limits as $s \to 1$ we obtain
\begin{equation*}
\limsup_{|z| \to 1}(1-|z|)v(z)|(g \circ \varphi)^{\prime}(z)| \leq \limsup_{|\varphi(z)| \to 1}(1-|z|)v(z)|(g \circ \varphi)^{\prime}(z)| = 0,
\end{equation*}
and we are done. 

If on the other hand $y \in \mathbb{D}$ then $\lim_{k \to \infty}g^{\prime}(\varphi(z_{n_k})) = g^{\prime}(y)$. Since furthermore $v$ is normal we have that $\varphi \in H^{\infty} \subset H_v^{0} = \mathcal{B}_{(1-r)v}^{0}$, and thus 
\begin{equation*}
\lim_{k \to \infty}(1-|z_{n_k}|)v(z_{n_k})|\varphi^{\prime}(z_{n_k})| = 0,
\end{equation*}
from which follows that
\begin{align*}
\limsup_{|z| \to 1}(1-|z|)v(z)|(g \circ \varphi)^{\prime}(z)| &= \lim_{k \to \infty}(1-|z_{n_k}|)v(z_{n_k})|(g \circ \varphi)^{\prime}(z_{n_k})| \\
&= \lim_{k \to \infty}(1-|z_{n_k}|)v(z_{n_k})|\varphi^{\prime}(z_{n_k})||g^{\prime}(\varphi(z_{n_k}))| \\
&= 0 \cdot |g^{\prime}(y)| = 0,
\end{align*}
and the proof is complete.
\end{proof}

\begin{thm}\label{thm:22H}
Let $\mathcal{X} \subset H(\mathbb{D})$ be a Banach space.   
\begin{itemize}
\item[(i)] If the space $\mathcal{X}$ satisfies conditions \textnormal{(I)-(V)} and the weight $v$ is normal, then $T_g^{\varphi} \colon \mathcal{X} \to H_v^{\infty}$ is compact if and only if $T_g^{\varphi} \colon \mathcal{X} \to H_v^0$ is compact.
\item[(ii)] If the space $\mathcal{X}$ satisfies conditions \textnormal{(I)} and \textnormal{(IV)}, then for any weight $v$, $T_g^{\varphi} \colon \mathcal{X} \to \mathcal{B}_v^0$ is compact if and only if $T_g^{\varphi} \colon \mathcal{X} \to \mathcal{B}_v^\infty$ is compact and $g\circ \varphi \in \mathcal{B}_v^0$.
\end{itemize}
\end{thm}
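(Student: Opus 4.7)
The plan is to use a uniform two-step reduction for both parts: first show that the image of $T_g^{\varphi}$ automatically lies in the ``little'' target space ($H_v^0$ or $\mathcal{B}_v^0$), and then note that compactness transfers from the ``big'' space to the ``little'' space for free, since the latter is a norm-closed subspace with the same norm.

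For part (i), the implication $T_g^{\varphi}\colon \mathcal{X}\to H_v^0$ compact $\Rightarrow$ $T_g^{\varphi}\colon \mathcal{X}\to H_v^{\infty}$ compact is immediate from the isometric inclusion $H_v^0\hookrightarrow H_v^{\infty}$. For the converse I would begin by invoking Theorem \ref{20}(i): compactness into $H_v^{\infty}$ gives
\begin{equation*}
\limsup_{|\varphi(z)|\to 1}(1-|z|)v(z)|(g\circ\varphi)'(z)|\|\delta_{\varphi(z)}\|_{\mathcal{X}\to\mathbb{C}} = 0,
\end{equation*}
and since $\|\delta_z\|_{\mathcal{X}\to\mathbb{C}}\geq 1/\|1\|_{\mathcal{X}}$ (constants are in $\mathcal{X}$), the functional-norm factor can be dropped. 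Lemma \ref{21} then upgrades this to a limsup as $|z|\to 1$, which via the Lusky identification $H_v^0 = \mathcal{B}_{(1-r)v}^0$ is exactly $g\circ\varphi\in H_v^0$. The key step is to verify $T_g^{\varphi}(f)\in H_v^0$ for every $f\in\mathcal{X}$: Lemma \ref{22}(i) places each dilate $T_g^{\varphi}(f_{r_n})$ in $H_v^0$, condition (IV) makes $\{f_{r_n}-f\}$ bounded in $\mathcal{X}$, and clearly $f_{r_n}-f\xrightarrow{co} 0$. Applying Lemma \ref{2} to the compact operator $T_g^{\varphi}\colon\mathcal{X}\to H_v^{\infty}$ then yields norm-convergence $T_g^{\varphi}(f_{r_n})\to T_g^{\varphi}(f)$ in $H_v^{\infty}$, and the norm-closedness of $H_v^0$ inside $H_v^{\infty}$ places the limit in $H_v^0$ as desired.

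For part (ii) the forward direction is even more direct: the isometric inclusion $\mathcal{B}_v^0\hookrightarrow\mathcal{B}_v^{\infty}$ transfers compactness, while applying $T_g^{\varphi}$ to the constant function $1\in\mathcal{X}$ produces $T_g^{\varphi}(1) = g\circ\varphi - g(0)\in \mathcal{B}_v^0$, hence $g\circ\varphi\in\mathcal{B}_v^0$. For the converse I would rerun the approximation scheme of part (i), using Lemma \ref{22}(ii) in place of Lemma \ref{22}(i) to place $T_g^{\varphi}(f_{r_n})$ in $\mathcal{B}_v^0$, and Lemma \ref{2} together with conditions (I) and (IV) to obtain norm-convergence $T_g^{\varphi}(f_{r_n})\to T_g^{\varphi}(f)$ in $\mathcal{B}_v^{\infty}$; note that Lemma \ref{21} is not needed here, because $g\circ\varphi\in\mathcal{B}_v^0$ is assumed outright rather than derived.

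The only substantive obstacle in either part is the ``image lies in the little space'' step. Conditions (I) and (IV) on $\mathcal{X}$ are calibrated precisely so that the dilate approximation $f_{r_n}\to f$ survives the passage through a compact operator into the big target, producing the norm-limit that the closed subspace can absorb; once this is in hand, the compactness transfer from the big to the little space is purely formal.
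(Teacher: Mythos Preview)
Your proposal is correct and follows essentially the same route as the paper's proof: establish that $g\circ\varphi$ lies in the little space (via Theorem \ref{20} and Lemma \ref{21} for part (i), by hypothesis for part (ii)), then use Lemma \ref{22} together with the compactness criterion of Lemma \ref{2} applied to the dilates $f_{r_n}$ to place $T_g^\varphi(f)$ in the norm-closed little subspace. The only cosmetic difference is that you drop the factor $\|\delta_{\varphi(z)}\|_{\mathcal{X}\to\mathbb{C}}$ via the elementary lower bound $\|\delta_z\|\geq 1/\|1\|_{\mathcal{X}}$, whereas the paper invokes condition (II) for the same purpose.
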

\begin{proof}
To prove (i), assume that $T_g^\varphi\colon \mathcal{X} \to H_v^\infty$ is compact. Using Theorem \ref{20} and condition (II) we deduce that
\begin{equation*}
\limsup_{|\varphi(z)| \to 1}\,(1 - |z|)v(z)|(g \circ \varphi)'(z)| = 0,
\end{equation*}
and applying Lemma \ref{21} we arrive at
\begin{equation*}
\limsup_{|z| \to 1}(1-|z|)v(z)|(g \circ \varphi)^{\prime}(z)| = 0,
\end{equation*}
which shows that $g \circ \varphi \in \mathcal{B}_{(1-r)v}^{0} = H_v^{0}$. Now choose an arbitrary $f \in \mathcal{X}$ and note that $f_r \xrightarrow{co} f$ as $r \to 1^{-}$. Since $T_g^{\varphi}$ is compact and the set $\{f_r \colon {0<r<1}\}$ is bounded in $\mathcal{X}$ by (IV), we have by Lemma \ref{2} that 
\begin{equation*}
T_g^{\varphi}(f_r) \xrightarrow{\lVert \cdot \rVert_v} T_g^{\varphi}(f), \textnormal{ as } r \to 1^{-}.
\end{equation*} 
But $T_g^{\varphi}(f_r) \in H_v^0$ for every $f \in H(\mathbb{D})$ and $0 < r < 1$  by Lemma \ref{22} and therefore we must have that $T_g^{\varphi}(f) \in H_v^0$. This shows that $T_g^{\varphi}(\mathcal{X}) \subset H_v^0$ and the proof of (i) is complete. The proof of part (ii) is similar to the proof of part (i).
\end{proof}

The following example shows that in general for normal weights $v$ and $w$, compactness of $T_g^\varphi \colon \mathcal{B}_v^\infty \to \mathcal{B}_w^\infty$ does not imply that $g \circ \varphi \in \mathcal{B}_w^0$.
\begin{exam}
Note that for each $0<\alpha <1$, the Volterra operator $T_g \colon \mathcal{B}_{v_{\alpha}}^\infty \to \mathcal{B}_{v_{\alpha}}^\infty$ is compact if and only if the multiplication operator $M_{g'} \colon \mathcal{B}_{v_{\alpha}}^\infty \to H_{v_{\alpha}}^\infty = \mathcal{B}_{v_{\alpha+1}}^\infty$ is compact, and this happens precisely when $g\in \mathcal{B}_{v_1}^0$ by \cite[Theorem 3.1]{10}. Let $0<\alpha <1$ and $0<\delta <1-\alpha$ and define $g_\delta (z) := (1-z)^\delta$ for $z\in \mathbb{D}$. Then, one can see that 
\begin{equation*}
|g_\delta'(z)|(1-|z|^2)\leq 2\delta (1-|z|^2)^\delta
\end{equation*}
for every $z\in \mathbb{D}$. This implies that $g_\delta\in \mathcal{B}_{v_1}^0$ and therefore $T_{g_\delta} \colon \mathcal{B}_{v_{\alpha}}^\infty \to \mathcal{B}_{v_{\alpha}}^\infty$ is compact.
Also, note that for each $r\in (0,1)$ we have
\begin{equation*}
|g_\delta'(r)|(1-|r|^2)^\alpha = \frac{\delta (1-r)^\alpha (1+r)^\alpha}{(1-r)^{1-\delta}}\geq \delta (1-r)^{\alpha + \delta -1},
\end{equation*}
which implies that $g_\delta\notin \mathcal{B}_{v_{\alpha}}^0$.
\end{exam}

We also have a natural weak compactness version of Theorem \ref{thm:22H} above.

\begin{thm}\label{thm:TgwXH}
Let $\mathcal{X} \subset H(\mathbb{D})$ be a Banach space satisfying conditions \textnormal{(I)} and \textnormal{(IV)}.
\begin{itemize}
\item[(i)] If $v$ is a normal weight, then $T_g^{\varphi} \colon \mathcal{X} \to H_v^0$ is weakly compact if and only if $T_g^{\varphi} \colon \mathcal{X} \to H_v^{\infty}$ is weakly compact and $g \circ \varphi \in H_v^{0}$.
\item[(ii)] If $v$ is a normal weight and $H_{v_0}^\infty \subset \mathcal{X}$ for some normal weight ${v_0}$, then the operator $T_g^{\varphi}\colon \mathcal{X} \to H_v^\infty$ is weakly compact if and only if $T_g^{\varphi} \colon \mathcal{X} \to H_v^0$ is weakly compact.
\item[(iii)] For any weight $v$, $T_g^{\varphi} \colon \mathcal{X} \to B_v^0$ is weakly compact if and only if $T_g^{\varphi} \colon \mathcal{X} \to B_v^{\infty}$ is weakly compact and $g \circ \varphi \in B_v^{0}$.
\end{itemize}
\end{thm}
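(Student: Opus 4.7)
The plan is to reduce each part to machinery already in place, principally Lemma \ref{2} for transferring information between $co$-convergence and weak convergence, Lemma \ref{22} for placing approximants into $H_v^0$ or $\mathcal{B}_v^0$, and, for (ii), the chain Theorem \ref{13} $\to$ Theorem \ref{thm:22H} to force $g\circ\varphi \in H_v^0$.

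I would start with part (i). The forward direction is routine: composing with the continuous inclusion $H_v^0 \hookrightarrow H_v^\infty$ preserves weak compactness, and $T_g^\varphi(1) = g\circ\varphi - g(0) \in H_v^0$ gives $g\circ\varphi \in H_v^0$. For the converse, fix $f\in\mathcal{X}$ and a sequence $r_n \uparrow 1$. By condition (IV), the sequence $\{f_{r_n}\}$ is bounded in $\mathcal{X}$ and $f - f_{r_n} \xrightarrow{co} 0$, so the weak-compactness version of Lemma \ref{2}, applied to the co-co-continuous operator $T_g^\varphi \colon \mathcal{X} \to H_v^\infty$, yields $T_g^\varphi(f_{r_n}) \to T_g^\varphi(f)$ weakly in $H_v^\infty$. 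Lemma \ref{22}(i) places each $T_g^\varphi(f_{r_n})$ in $H_v^0$, and since $H_v^0$ is a norm-closed subspace of $H_v^\infty$, hence weakly closed by Mazur's theorem, we conclude $T_g^\varphi(f) \in H_v^0$. Thus $T_g^\varphi$ corestricts to $\mathcal{X} \to H_v^0$ and inherits weak compactness from the larger target. Part (iii) is obtained by the identical template, with Lemma \ref{22}(ii) in place of Lemma \ref{22}(i) and the pair $(\mathcal{B}_v^0, \mathcal{B}_v^\infty)$ in place of $(H_v^0, H_v^\infty)$.

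For part (ii), having (i) in hand it suffices to show $g\circ\varphi \in H_v^0$. I would first observe that the inclusion $H_{v_0}^\infty \hookrightarrow \mathcal{X}$ is continuous by the closed graph theorem (through the common continuous embeddings into $(H(\mathbb{D}),co)$), so the restriction $T_g^\varphi \colon H_{v_0}^\infty \to H_v^\infty$ inherits weak compactness. Theorem \ref{13}(i) then upgrades this to compactness, and Theorem \ref{thm:22H}(i), applicable since $H_{v_0}^\infty$ satisfies (I)--(V) for normal $v_0$, produces compactness of $T_g^\varphi \colon H_{v_0}^\infty \to H_v^0$. Evaluating at the constant $1$ gives $g\circ\varphi - g(0) \in H_v^0$, hence $g\circ\varphi \in H_v^0$, and part (i) closes the argument.

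The main subtlety lies in part (ii): the extra hypothesis $H_{v_0}^\infty \subset \mathcal{X}$ is used exactly once, to extract $g\circ\varphi \in H_v^0$ from what is otherwise only weak compactness into $H_v^\infty$. Without routing through the sub-operator on $H_{v_0}^\infty$ and invoking Theorems \ref{13} and \ref{thm:22H}, this conclusion appears out of reach, whereas the remaining content of the theorem reduces to the standard interaction of weak convergence with norm-closed subspaces.
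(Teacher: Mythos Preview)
Your proof is correct and follows essentially the same approach as the paper. For parts (i) and (iii) the arguments are identical: Lemma \ref{2} gives weak convergence $T_g^\varphi(f_{r_n}) \to T_g^\varphi(f)$, Lemma \ref{22} places the approximants in the small space, and weak closedness of $H_v^0$ (resp.\ $\mathcal{B}_v^0$) in $H_v^\infty$ (resp.\ $\mathcal{B}_v^\infty$) finishes. For part (ii) the paper forward-references Corollary \ref{cor:HH} to extract $g\circ\varphi \in H_v^0$, whereas you go directly through Theorem \ref{13}(i) and Theorem \ref{thm:22H}(i); since the relevant implication in Corollary \ref{cor:HH} is itself proved via exactly these two results, your route is just an unpacking of the paper's citation and avoids the forward reference.
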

\begin{proof}
We begin by proving (i). If $T_g^{\varphi} \colon \mathcal{X} \to H_v^0$ is weakly compact then obviously $T_g^{\varphi} \colon \mathcal{X} \to H_v^{\infty}$ is weakly compact and also $g \circ \varphi \in H_v^{0}$ since $\mathcal{X}$ contains the constant functions. Conversely, assume that $T_g^{\varphi} \colon \mathcal{X} \to H_v^{\infty}$ is weakly compact and $g \circ \varphi \in H_v^{0}$. Let $f\in \mathcal{X}$ and choose an arbitrary sequence $\{r_n\}_{n=1}^{\infty} \subset (0,1)$ such that $r_n \rightarrow 1$. Then $f_{r_n} \xrightarrow{co} f$ and since $\sup_{n \in \mathbb{N}}\|f_{r_n}\|_{\mathcal{X}}<\infty$ by (IV) we have that $T_g^{\varphi}f_{r_n} \xrightarrow{w} T_g^{\varphi}f$ by Lemma \ref{2}. On the other hand, by Lemma \ref{22} (i), we know that $T_g^{\varphi}f_{r_n}\in H_v^{0}$ for each $n\in \mathbb{N}$. This implies that $T_g^{\varphi}f\in H_v^{0}$, showing that $T_g^{\varphi}(\mathcal{X}) \subset H_v^{0}$ and the proof of (i) is complete.

To prove (ii), note that since $H_{v_0}^\infty \subset \mathcal{X}$, condition (I) implies that the identity operator $\textnormal{id} \colon H_{v_0}^\infty \to \mathcal{X}$ is bounded. Thus if $T_g^\varphi \colon \mathcal{X} \to H_v^\infty$ is weakly compact then $T_g^\varphi \colon H_{v_0}^\infty \to H_v^\infty$ is weakly compact. Hence, by Corollary \ref{cor:HH}, $g\circ \varphi \in H_v^0$ and therefore Theorem \ref{thm:TgwXH} (i) implies that $T_g^\varphi \colon \mathcal{X} \to H_v^0$ is weakly compact, proving (ii). The proof of part (iii) is similar to the proof of part (i). 
\end{proof}

\begin{rem}
Theorem \ref{thm:TgwXH} (ii) can for example be applied to the weighted Bergman spaces $\mathcal{X} = A_{\alpha}^p$ with $1 \leq p < \infty$ and $\alpha > 0$ because $H_{v_{\!\hspace*{0.2mm}\frac{\alpha}{p}}}^{\infty} \subset A_{\alpha}^p$.
\end{rem}

We end this paper with two corollaries which generalize \cite[Theorem 2]{14} and summarize some of the theorems obtained in this section, but first we need an additional result. The closed unit ball of $H^0_v$ is $co$-dense in the closed unit ball $B_{H^\infty_v}$ for any weight $v$, so the restriction map $\varrho \colon {}^*\!\hspace*{0.2mm}H_v^\infty \to (H_v^0)^*$, mapping $\ell \mapsto \ell|H^0_v$, is an isometric isomorphism by \cite[Theorem 1.1]{11}. Using the evaluation map $\Phi_{H_v^{\infty}} \colon H_v^{\infty} \to (^{\ast}\!\hspace*{0.2mm}H_v^{\infty})^{\ast}$ one then obtains an isometric isomorphism $\Lambda \colon H_v^{\infty} \to (H_v^0)^{\ast\ast} $ by defining 
\begin{equation*}
\Lambda(f) := \left(\widehat{f}^{\,H_v^{\infty}}\big|{^{\ast}\!\hspace*{0.2mm}H_v^{\infty}}\right) \circ \varrho^{-1}.
\end{equation*}
Here we use the notation $\widehat{f}^{\,\mathcal{X}}$ to emphasize which space $\mathcal{X}$ the evaluation map is acting on, that is, $\widehat{f}^{\,\mathcal{X}} \in \mathcal{X}^{\ast\ast}$ whenever $f \in \mathcal{X}$. Using the differentiation operator $D \colon \widetilde{\mathcal{B}}^\infty_v \to H^\infty_v$ and the integration operator $I \colon H^\infty_v \to \widetilde{\mathcal{B}}^\infty_v$ as natural isometric isomorphisms between the spaces $\widetilde{\mathcal{B}}^\infty_v$ and $H^\infty_v$, one obtains a corresponding isometric isomorphism $\Delta \colon \widetilde{\mathcal{B}}^\infty_v \to \big(\widetilde{\mathcal{B}}_v^0\big)^{**}$.

\begin{lem}\label{lem:16}
Let $v$ and $w$ be weights and $T \colon H(\mathbb{D}) \to H(\mathbb{D})$ be a co-co-continuous operator.
\begin{itemize}
\item[(i)]  If $T \colon H_v^0 \to H_w^\infty$ is weakly compact, then for every $f \in H_v^{\infty}$
\begin{equation*}
T^{**}(\Lambda(f)) = \widehat{T(f)}^{H_w^{\infty}}\!.
\end{equation*}
\item[(ii)] If $T \colon \widetilde{\mathcal{B}}_v^0 \to \widetilde{\mathcal{B}}_w^\infty$ is weakly compact, then for every $f \in \widetilde{\mathcal{B}}_v^{\infty}$
\begin{equation*}
	T^{**}(\Delta(f)) = \widehat{T(f)}^{\widetilde{\mathcal{B}}_w^{\infty}}\!.
\end{equation*}
\end{itemize}
\end{lem}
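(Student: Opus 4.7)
The plan is to express $\Lambda(f)$ as a weak-star limit of canonically embedded elements $\widehat{f_\alpha}^{\,H_v^0}$, transfer the trivial identity $T^{**}(\widehat{f_\alpha}^{\,H_v^0}) = \widehat{T(f_\alpha)}^{\,H_w^\infty}$ via $w^{\ast}$-$w^{\ast}$ continuity of $T^{**}$, and then collapse $T^{**}(\Lambda(f))$ back into the canonical image of $H_w^\infty$ using Gantmacher's theorem. By the $co$-density of $B_{H_v^0}$ in $B_{H_v^\infty}$ quoted before the lemma, for any $f \in H_v^\infty$ one can choose a net $\{f_\alpha\} \subset H_v^0$ with $\|f_\alpha\|_{H_v^\infty} \leq \|f\|_{H_v^\infty}$ and $f_\alpha \xrightarrow{co} f$.

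For such a net, unpacking the definition of $\Lambda$ and using the identity $\varrho^{-1}(\ell)|H_v^0 = \ell$ gives $\Lambda(f_\alpha)(\ell) = \ell(f_\alpha)$, so $\Lambda(f_\alpha) = \widehat{f_\alpha}^{\,H_v^0}$ and therefore $T^{**}(\Lambda(f_\alpha)) = \widehat{T(f_\alpha)}^{\,H_w^\infty}$ for every index $\alpha$. On the other hand, since $\varrho^{-1}(\ell) \in {}^{\ast}\!H_v^\infty$ is $co$-continuous on bounded subsets of $H_v^\infty$, the boundedness and $co$-convergence of $\{f_\alpha\}$ yield
$$\Lambda(f_\alpha)(\ell) = \varrho^{-1}(\ell)(f_\alpha) \to \varrho^{-1}(\ell)(f) = \Lambda(f)(\ell)$$
for every $\ell \in (H_v^0)^{\ast}$, i.e.\ $\Lambda(f_\alpha) \xrightarrow{w^{\ast}} \Lambda(f)$ in $\sigma((H_v^0)^{**}, (H_v^0)^{*})$. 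The operator $T^{**}$ is always $w^{\ast}$-$w^{\ast}$ continuous, and weak compactness of $T$ combined with Gantmacher's theorem forces $T^{**}(\Lambda(f)) = \widehat{g}^{\,H_w^\infty}$ for a unique $g \in H_w^\infty$, while simultaneously $\widehat{T(f_\alpha)}^{\,H_w^\infty} \xrightarrow{w^{\ast}} \widehat{g}^{\,H_w^\infty}$.

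To identify $g$, I would pair the last convergence with $\delta_z \in (H_w^\infty)^{\ast}$ to get $(Tf_\alpha)(z) \to g(z)$ pointwise on $\mathbb{D}$, and simultaneously use the $co$-$co$-continuity of $T \colon H(\mathbb{D}) \to H(\mathbb{D})$ to obtain $(Tf_\alpha)(z) \to (Tf)(z)$. Comparing pointwise limits forces $g = Tf$, and in particular $Tf \in H_w^\infty$, completing part (i). Part (ii) proceeds by the same template: the differentiation/integration pair $(D,I)$ transports the $co$-density of $B_{H_v^0}$ in $B_{H_v^\infty}$ to $co$-density of $B_{\widetilde{\mathcal B}_v^0}$ in $B_{\widetilde{\mathcal B}_v^\infty}$, and the diagram defining $\Delta$ is formally identical to the one defining $\Lambda$, so the argument repeats verbatim with $H_v^\bullet$ replaced by $\widetilde{\mathcal B}_v^\bullet$. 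The most delicate step, which I would spell out most carefully, is the weak-star convergence $\Lambda(f_\alpha) \xrightarrow{w^{\ast}} \Lambda(f)$ in the middle paragraph, since this is precisely where the definition of $\Lambda$ interacts non-trivially with the $co$-continuity property built into ${}^{\ast}\!H_v^\infty$.
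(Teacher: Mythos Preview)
Your proposal is correct and follows essentially the same route as the paper's proof: approximate $f$ by a bounded net in $H_v^0$, identify $\Lambda$ on this net with the canonical embedding, push through $T^{**}$, and identify the limit pointwise via $\delta_z$ and the $co$-$co$-continuity of $T$. The only cosmetic differences are that the paper obtains its approximating net from Goldstine's theorem (and then deduces $co$-convergence via equicontinuity) rather than directly from the $co$-density of $B_{H_v^0}$ in $B_{H_v^\infty}$, and that the paper lands $T^{**}(\Lambda(f))$ in $\widehat{H_w^\infty}$ via the $w^*$-$w$-continuity of $T^{**}$ and a weak-closure argument rather than by quoting Gantmacher's range characterization directly; your variant is slightly more streamlined.
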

\begin{proof}
To prove (i), assume that  $T \colon H_v^0 \to H_w^\infty$ is weakly compact, choose a function $f \in H_v^{\infty}$ not identically zero and define $g := \frac{f}{\|f\|_{H_v^{\infty}}} \in B_{H_v^{\infty}}$. Note that the operator $T \colon H_v^{\infty} \to H_w^{\infty}$ is bounded by Theorem \ref{thm:bdd1} (see Remark \ref{27}), so the equality about to be proven makes sense. Since the mapping $\Lambda \colon H_v^\infty \to ({H_v^0})^{**}$ is an isometric isomorphism, we have by Goldstine's theorem that $\Lambda(g) \in B_{({H_v^0})^{**}} = \overline{B_{\widehat{{H}_v^0}}}^{\,w^{\ast}}$. Hence, there exists a net $\left\{\widehat{g}_{\gamma}^{\,H_v^0}\right\} \subset B_{\widehat{{H}_v^0}}$, where every $g_{\gamma} \in B_{H_v^0}$, such that $\widehat{g}_{\gamma}^{\,H_v^0} \xrightarrow{w^*} \Lambda(g)$ in ${(H_v^0})^{**}$. Moreover, since $\varrho^{-1}(\ell)|H_v^{0} = \ell$ for every $\ell \in (H_v^0)^{\ast}$, we have that
\begin{equation*}
\Lambda(g_{\gamma}) = \left(\widehat{g}_{\gamma}^{\,H_v^{\infty}}\big|{^{\ast}\!\hspace*{0.2mm}H_v^{\infty}}\right) \circ \varrho^{-1} = \widehat{g}_{\gamma}^{\,H_v^0},
\end{equation*}
and thus $\Lambda(g_{\gamma}) \xrightarrow{w^*} \Lambda(g)$ in ${(H_v^0})^{**}$. For any $z \in \mathbb{D}$, $\delta_z \in {^{\ast}\!\hspace*{0.2mm}H_v^{\infty}}$ and $\varrho(\delta_z) \in  (H_v^0)^{\ast}$, which gives that
\begin{equation*}
g_{\gamma}(z) = \Lambda(g_{\gamma})(\varrho(\delta_z)) \rightarrow \Lambda(g)(\varrho(\delta_z)) = g(z).
\end{equation*}
Furthermore, $\{g_{\gamma}\}$ is an equicontinuous family because $\|g_{\gamma}\|_{H_v^{\infty}} \leq 1$ for every $\gamma$, so we actually have that $g_{\gamma} \xrightarrow{co} g$, and since $T$ is \textit{co}-\textit{co}-continuous we get $T(g_{\gamma}) \xrightarrow{co} T(g)$.

On the other hand, since $T \colon H_v^0 \to H_w^\infty$ is weakly compact, $T^{\ast} \colon (H_w^{\infty})^{\ast} \to (H_v^0)^{\ast}$ is also weakly compact, and hence $T^{\ast\ast} \colon (H_v^0)^{\ast\ast} \to (H_w^\infty)^{\ast\ast}$ is $w^{\ast}$-$w$-continuous. We thus have that
\begin{equation}\label{26}
\widehat{T(g_{\gamma})}^{H_w^{\infty}} = T^{**}\big(\widehat{g}_{\gamma}^{H_v^0}\big) \xrightarrow{w} T^{**}(\Lambda(g)),
\end{equation}
where the first equality holds because
\begin{equation*}
\widehat{T(g_{\gamma})}^{H_w^{\infty}}(\ell) = \ell(T(g_{\gamma})) = T^{\ast}(\ell)(g_{\gamma}) = \widehat{g}_{\gamma}^{H_v^0}(T^{\ast}(\ell)) = T^{**}\big(\widehat{g}_{\gamma}^{H_v^0}\big)(\ell)
\end{equation*}
for any $\ell \in (H_w^{\infty})^{\ast}$. The left-hand side of (\ref{26}) is contained in $\widehat{H_w^\infty}$ for every $\gamma$, which gives that
\begin{equation*}
T^{**}(\Lambda(g)) \in \overline{\widehat{H_w^\infty}}^{\,w} = \overline{\widehat{H_w^\infty}}^{\,\|\cdot\|_{(H_w^{\infty})^{\ast\ast}}} = \widehat{H_w^\infty},
\end{equation*}
and hence there exists $h\in H_w^\infty$ such that  $T^{**}(\Lambda(g)) = \widehat{h}^{H_w^{\infty}}$. Now since obviously
\begin{equation*}
\widehat{T(g_{\gamma})}^{H_w^{\infty}}  \xrightarrow{w^*} \widehat{h}^{H_w^{\infty}}
\end{equation*}
and $\delta_z \in (H_w^{\infty})^{\ast}$ for any $z \in \mathbb{D}$, it follows that $T(g_{\gamma})$ converges pointwise to $h$ on the unit disc. Hence, we must have that $h = T(g)$, showing that
\begin{equation*}
T^{**}(\Lambda(g)) = \widehat{T(g)}^{H_w^{\infty}},
\end{equation*}
which also holds when $g$ is replaced by $f = \|f\|_{H_v^{\infty}}g$ due to linearity, and the proof of (i) is complete. Part (ii) is proved in a similar way.
\end{proof}

\begin{cor}\label{cor:HH}
Let $v$ and $w$ be normal weights and assume that $v$ is equivalent to its associated weight $\widetilde{v}$. Then the following statements are equivalent\textnormal{:}
\begin{itemize}
\item[(1)] $T_g^\varphi \colon H_v^\infty \to H_w^\infty$ is compact.
\item[(2)] $T_g^\varphi \colon H_v^\infty \to H_w^\infty$ is weakly compact.
\item[(3)] $T_g^\varphi \colon H_v^\infty \to H_w^0$ is compact.
\item[(4)] $T_g^\varphi \colon H_v^\infty \to H_w^0$ is weakly compact or equivalently bounded.
\item[(5)] $T_g^\varphi \colon H_v^0 \to H_w^0$ is compact.
\item[(6)] $T_g^\varphi \colon H_v^0 \to H_w^0$ is weakly compact.
\item[(7)] $T_g^\varphi \colon H_v^0 \to H_w^\infty$ is compact.
\item[(8)] $T_g^\varphi \colon H_v^0 \to H_w^\infty$ is weakly compact.
\item[(9)] $\lim_{|\varphi(z)| \to 1}(1-|z|)|(g \circ \varphi)^{\prime}(z)|\dfrac{w(z)}{v(\varphi(z))}  = 0$.
\end{itemize}
\end{cor}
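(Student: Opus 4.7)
The plan is to close the circular chain $(9)\Leftrightarrow (1)\Leftrightarrow (2)\Leftrightarrow (3)\Leftrightarrow (4)$ around compactness and weak compactness on source $H_v^\infty$, feed this into the statements with source $H_v^0$ via the continuous inclusions $H_v^0\hookrightarrow H_v^\infty$ and $H_w^0\hookrightarrow H_w^\infty$, and finally close back using the bidual identification in Lemma \ref{lem:16}. The anchor is $(1)\Leftrightarrow (9)$: because $v$ is normal and equivalent to its associated weight, $H_v^\infty$ satisfies conditions \textnormal{(I)}-\textnormal{(V)} and $\lVert\delta_{\varphi(z)}\rVert_{H_v^\infty\to\mathbb C}\asymp 1/v(\varphi(z))$. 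Inserting this point-evaluation norm into Theorem \ref{20}(i) identifies $\lVert T_g^\varphi\rVert_{e,H_v^\infty\to H_w^\infty}$ with the $\limsup$ appearing in (9), so one vanishes precisely when the other does.

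For the remaining equivalences with source $H_v^\infty$, Theorem \ref{13}(i) applied to $\mathcal Y=H_w^\infty$ gives $(1)\Leftrightarrow (2)$, while Theorem \ref{thm:22H}(i) with $\mathcal X=H_v^\infty$ yields $(1)\Leftrightarrow (3)$. The implication $(3)\Rightarrow (4)$ is trivial since compact $\Rightarrow$ weakly compact $\Rightarrow$ bounded. For $(4)\Rightarrow (2)$ I would invoke the Lusky isomorphisms $H_v^\infty\approx\ell^\infty$ and $H_w^0\approx c_0$ together with Rosenthal's theorem (already cited just before the corollary after Theorem \ref{13}): any bounded operator $T_g^\varphi\colon H_v^\infty\to H_w^0$ is then automatically weakly compact, and composition with the inclusion $H_w^0\hookrightarrow H_w^\infty$ produces (2). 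The trivial restrictions provide the remaining one-way implications $(1)\Rightarrow (7)$, $(3)\Rightarrow (5)$, $(5)\Rightarrow (6)$, $(5)\Rightarrow (7)$, $(6)\Rightarrow (8)$ and $(7)\Rightarrow (8)$, so every statement is now linked to (2) except for the reverse direction $(8)\Rightarrow(2)$.

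The hard part, and only genuinely nontrivial remaining step, is $(8)\Rightarrow (2)$, which must lift weak compactness from the smaller source $H_v^0$ up to $H_v^\infty$. Here Lemma \ref{lem:16}(i) is the decisive tool: assuming $T_g^\varphi\colon H_v^0\to H_w^\infty$ is weakly compact, the identity $(T_g^\varphi)^{**}(\Lambda(f))=\widehat{T_g^\varphi(f)}^{\,H_w^\infty}$ valid for every $f\in H_v^\infty$ realizes the map $f\mapsto\widehat{T_g^\varphi(f)}^{\,H_w^\infty}$ as the composition of the isometric isomorphism $\Lambda\colon H_v^\infty\to(H_v^0)^{**}$ with $(T_g^\varphi)^{**}$, which is weakly compact by Gantmacher's theorem. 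Pulling back through the isometric evaluation $H_w^\infty\hookrightarrow(H_w^\infty)^{**}$ then yields weak compactness of $T_g^\varphi\colon H_v^\infty\to H_w^\infty$, closing the circle. The subtlety to watch out for is that the weak compactness hypothesis (8) is exactly what is needed to apply Lemma \ref{lem:16}; without it the bidual factorization may fail and the whole strategy breaks down.
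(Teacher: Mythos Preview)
Your proposal is correct and follows essentially the same route as the paper: the anchors $(1)\Leftrightarrow(9)$ via Theorem~\ref{20}, $(1)\Leftrightarrow(2)$ via Theorem~\ref{13}, $(1)\Leftrightarrow(3)$ via Theorem~\ref{thm:22H}, and the key lift $(8)\Rightarrow(2)$ via Lemma~\ref{lem:16} are exactly what the paper does. The only cosmetic difference is your justification of the equivalence inside (4): you invoke Rosenthal's theorem (bounded $\ell^\infty\to c_0$ is weakly compact), whereas the paper argues directly that $H_v^\infty\approx\ell^\infty$ is a Grothendieck space, so that $T^*$ takes weak$^*$-convergent sequences in $(H_w^0)^*$ to weakly convergent ones; both arguments are standard and equally short.
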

\begin{proof}
We begin by justifying the equivalence within statement (4). If $T \colon H_v^{\infty} \to H_w^0$ is any bounded operator and $\{\ell_n\}_{n=1}^{\infty} \subset B_{(H_w^0)^{\ast}}$, then there is a $w^{\ast}$-convergent subsequence $\{\ell_{n_k}\}_{k=1}^{\infty}$ with some limit $\ell \in  B_{(H_w^0)^{\ast}}$. This follows from the Alaoglu theorem, since the topology $(B_{(H_w^0)^{\ast}}, \sigma((H_w^0)^{\ast}, H_w^0))$ is metrizable due to the separability of $H_w^0$. This gives that $T^{\ast}(\ell_{n_k}) \xrightarrow{w^{\ast}} T^{\ast}(\ell)$, but $H_v^{\infty} \approx \ell^{\infty}$ is a Grothendieck space meaning that we actually have weak convergence of $T^{\ast}(\ell_{n_k})$ to $T^{\ast}(\ell)$, and hence both $T^{\ast} \colon (H_w^0)^{\ast} \to (H_v^{\infty})^{\ast}$ and $T \colon H_v^{\infty} \to H_w^0$ are weakly compact. 

The statements (1) and (2) are equivalent by Theorem \ref{13} (i) and the equivalence between (1) and (3) follows from Theorem \ref{thm:22H} (i). If the operator $T_g^\varphi \colon H_v^0 \to H_w^\infty$ is weakly compact, then  ${(T_g^\varphi)}^{**} \colon  (H_v^0)^{**} \rightarrow \widehat{H_w^\infty}$ is weakly compact, and hence, if $Q \colon H_w^{\infty} \to \widehat{H_w^\infty}$ is the canonical embedding $Q(f) = \widehat{f}$, we have by Lemma \ref{lem:16} (i) that
\begin{equation*}
Q^{-1}\circ {(T_g^\varphi)}^{**} \circ \Lambda = T_g^\varphi \colon H_v^\infty \to H_w^\infty
\end{equation*}
 is weakly compact, and therefore (8) implies (2). Finally, by Theorem \ref{20} (i), (1) is equivalent to (9), and the rest of the implications are obvious.
\end{proof}

\begin{cor}
Let $v$ and $w$ be normal weights. Then the following statements are equivalent\textnormal{:}
\begin{itemize}
\item[(1)] $T_g^\varphi \colon \widetilde{\mathcal{B}}_v^\infty \to \widetilde{\mathcal{B}}_w^\infty$ is compact and $g\circ \varphi \in \widetilde{\mathcal{B}}_w^0$.
\item[(2)] $T_g^\varphi \colon \widetilde{\mathcal{B}}_v^\infty \to \widetilde{\mathcal{B}}_w^\infty$ is weakly compact and $g\circ \varphi \in \widetilde{\mathcal{B}}_w^0$.
\item[(3)] $T_g^\varphi \colon \widetilde{\mathcal{B}}_v^\infty \to \widetilde{\mathcal{B}}_w^0$ is compact.
\item[(4)] $T_g^\varphi \colon \widetilde{\mathcal{B}}_v^\infty \to \widetilde{\mathcal{B}}_w^0$ is weakly compact or equivalently bounded.
\item[(5)] $T_g^\varphi \colon \widetilde{\mathcal{B}}_v^0 \to \widetilde{\mathcal{B}}_w^0$ is compact.
\item[(6)] $T_g^\varphi \colon \widetilde{\mathcal{B}}_v^0 \to \widetilde{\mathcal{B}}_w^0$ is weakly compact.
\item[(7)] $T_g^\varphi \colon \widetilde{\mathcal{B}}_v^0 \to \widetilde{\mathcal{B}}_w^\infty$ is compact  and $g\circ \varphi \in \widetilde{\mathcal{B}}_w^0$.
\item[(8)] $T_g^\varphi \colon \widetilde{\mathcal{B}}_v^0 \to \widetilde{\mathcal{B}}_w^\infty$ is weakly compact  and $g\circ \varphi \in \widetilde{\mathcal{B}}_w^0$.
\end{itemize}
\end{cor}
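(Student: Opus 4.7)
The plan is to mirror the proof of Corollary \ref{cor:HH} verbatim, replacing each ingredient with its Bloch-space analogue. The isometric isomorphisms $D \colon \widetilde{\mathcal{B}}_v^{\infty} \to H_v^{\infty}$ and $I \colon H_v^{\infty} \to \widetilde{\mathcal{B}}_v^{\infty}$ (which restrict to isomorphisms between $\widetilde{\mathcal{B}}_v^0$ and $H_v^0$) transfer the structural properties: by Lusky's theorem $\widetilde{\mathcal{B}}_v^{\infty} \approx H_v^{\infty} \approx \ell^{\infty}$ is a Grothendieck space whenever $v$ is normal, and $\widetilde{\mathcal{B}}_v^0 \approx c_0$ is separable. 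Theorem \ref{13} (ii), Theorem \ref{thm:22H} (ii) and Lemma \ref{lem:16} (ii) are the three substitutes for their $H$-space counterparts used in Corollary \ref{cor:HH}.

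First I would dispatch the equivalence contained in (4). Given a bounded $T\colon \widetilde{\mathcal{B}}_v^{\infty}\to \widetilde{\mathcal{B}}_w^0$, take $\{\ell_n\}\subset B_{(\widetilde{\mathcal{B}}_w^0)^*}$; by Alaoglu applied to the metrizable ball of the dual of the separable space $\widetilde{\mathcal{B}}_w^0$, some subsequence $w^*$-converges, so $T^*(\ell_{n_k})\xrightarrow{w^*}T^*(\ell)$ in $(\widetilde{\mathcal{B}}_v^{\infty})^*$. Since $\widetilde{\mathcal{B}}_v^{\infty}$ is Grothendieck, this is weak convergence, so $T^*$ and hence $T$ is weakly compact.

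Next, (1)$\Leftrightarrow$(2) is Theorem \ref{13} (ii) applied with target $\mathcal{Y}=\widetilde{\mathcal{B}}_w^{\infty}$ (a closed subspace of $\mathcal{B}_w^{\infty}$ satisfying (I)), noting that $T_g^{\varphi}|B_{\mathcal{B}_v^{\infty}}$ is trivially $co$-$co$-continuous. The equivalence (1)$\Leftrightarrow$(3) is exactly Theorem \ref{thm:22H} (ii) (the $f(0)=0$ condition is automatic for $T_g^{\varphi}$ and is preserved by everything in sight, so switching between $\mathcal{B}$ and $\widetilde{\mathcal{B}}$ is cosmetic). The equivalences (5)$\Leftrightarrow$(6) and (7)$\Leftrightarrow$(8)+($g\circ\varphi\in\widetilde{\mathcal{B}}_w^0$) similarly follow from Theorem \ref{13} (ii) combined with Theorem \ref{thm:22H} (ii)/Theorem \ref{thm:TgwXH} (iii).

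The one non-routine implication, and the place I expect to spend real effort, is (8)$\Rightarrow$(2): weak compactness on the small space $\widetilde{\mathcal{B}}_v^0$ must be promoted to weak compactness on the big space $\widetilde{\mathcal{B}}_v^{\infty}$. This is precisely what Lemma \ref{lem:16} (ii) was set up for. Assuming (8), the bitranspose $(T_g^{\varphi})^{**}\colon (\widetilde{\mathcal{B}}_v^0)^{**}\to(\widetilde{\mathcal{B}}_w^{\infty})^{**}$ is weakly compact by Gantmacher, and Lemma \ref{lem:16} (ii) identifies its image on $\Delta(\widetilde{\mathcal{B}}_v^{\infty})$ with $\widehat{T_g^{\varphi}(f)}^{\widetilde{\mathcal{B}}_w^{\infty}}$; hence if $Q\colon \widetilde{\mathcal{B}}_w^{\infty}\to\widehat{\widetilde{\mathcal{B}}_w^{\infty}}$ is the canonical embedding, one has
\begin{equation*}
T_g^{\varphi} \;=\; Q^{-1}\circ (T_g^{\varphi})^{**}\circ \Delta\colon \widetilde{\mathcal{B}}_v^{\infty}\to\widetilde{\mathcal{B}}_w^{\infty},
\end{equation*}
which is weakly compact as a composition of such operators. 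All remaining implications $(1)\Rightarrow(5)\Rightarrow(6)\Rightarrow(8)$ and $(3)\Rightarrow(4)$, $(5)\Rightarrow(7)$, $(2)\Rightarrow(8)$ are by restriction/corestriction, with the observation that $g\circ\varphi=T_g^{\varphi}\mathbf{1}$ lies in any target space of a bounded $T_g^{\varphi}$ originating from a space containing the constants, closing the loop.
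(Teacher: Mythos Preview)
Your approach is exactly what the paper does: its entire proof reads ``Similar reasoning as in the proof of Corollary~\ref{cor:HH}. Notice that the Bloch space $\widetilde{\mathcal{B}}_v^\infty$ satisfies conditions (I) and (IV) required in Theorem~\ref{thm:22H}(ii) for any normal weight $v$.'' You have correctly identified every Bloch-space substitute needed --- Theorem~\ref{13}(ii), Theorem~\ref{thm:22H}(ii), Lemma~\ref{lem:16}(ii), and the Grothendieck/separability structure transferred from $H_v^\infty$, $H_v^0$ via the isometries $D$ and $I$.

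Two small points of care. First, Theorem~\ref{13}(ii) is stated only for operators with \emph{domain} $\mathcal{B}_v^\infty$, so it does not directly yield $(5)\Leftrightarrow(6)$ or $(7)\Leftrightarrow(8)$; but you do not actually need these, since the implication cycle closes without them. Second, your closing device ``$g\circ\varphi = T_g^\varphi\mathbf{1}$ lies in the target space'' breaks down here because $\mathbf{1}\notin\widetilde{\mathcal{B}}_v^0$ and $\mathbf{1}\notin\widetilde{\mathcal{B}}_v^\infty$, so the step $(6)\Rightarrow(8)$ (which must produce the condition $g\circ\varphi\in\widetilde{\mathcal{B}}_w^0$) is not justified as written. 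The clean route is to promote the boundedness implicit in $(6)$ to boundedness $\widetilde{\mathcal{B}}_v^\infty\to\widetilde{\mathcal{B}}_w^0$ by a Theorem~\ref{thm:bdd1}-type argument (using $f_r\in\widetilde{\mathcal{B}}_v^0$ for $f\in\widetilde{\mathcal{B}}_v^\infty$), landing in $(4)$; then $(4)\Rightarrow(3)\Rightarrow(1)$ recovers the $g\circ\varphi$ condition via Theorem~\ref{thm:22H}(ii).
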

\begin{proof}
Similar reasoning as in the proof of Corollary \ref{cor:HH}. Notice that the Bloch space $\widetilde{\mathcal{B}}_v^\infty$ satisfies conditions (I) and (IV) required in Theorem \ref{thm:22H} (ii) for any normal weight $v$.
\end{proof}

\noindent \textbf{Acknowledgements.} The first and last authors are grateful for the financial support from the Doctoral Network in Information Technologies and Mathematics at \AA bo Akademi University. 
The second, fourth and last authors were  partially supported by the Academy of Finland project 296718. The second author is thankful to Wayne Smith for informing about reference \cite{15}. Part of this research was done while the third and fourth authors were visiting \AA bo Akademi University, whose hospitality is gratefully acknowledged.


\begin{thebibliography}{99}

\bibitem{25} A. Aleman, J.A. Cima, \emph{An integral operator on $H^p$ and Hardy's inequality}, J. Anal. Math. 85 (2001), 157-176.
\bibitem{26} A. Aleman, A. Siskakis, \emph{An integral operator  on $H^p$}, Complex Var. Theory Appl. 28 (1995), 149-158.
\bibitem{27} A. Aleman, A. Siskakis, \emph{Integration operators on Bergman spaces}, Indiana Univ. Math. J. 46 (1997), 337-356.
\bibitem{1} A. Anderson, M. Jovovic and W. Smith, \emph{Some integral operators acting on $H^{\infty}$}, Integr. Equ. Oper. Theory 80 (2014), 275-291.
\bibitem{14} M. Basallote, M.D. Contreras, C. Hern{\'a}ndez-Mancera, M.J. Mart{\'i}n and P.J. Pa{\'u}l, \emph{Volterrra operators and semigroups in weighted Banach spaces of analytic functions}, Collect. Math. 65 (2014), 233-249.
\bibitem{20} K.D. Bierstedt, J. Bonet and J. Taskinen, \emph{Associated weights and spaces of holomorphic functions}, Studia Math. 127 (1998), 137-168.
\bibitem{11} K.D. Bierstedt and W.H. Summers, \emph{Biduals of weighted Banach spaces of analytic functions}, J. Austral. Math. Soc. Ser. A 54 (1993), 70-79.
\bibitem{7}  J. Bonet, P. Domanski and M. Lindstr\"om, \emph{Weakly compact composition operators on analytic vector-valued function spaces}, Ann. Acad. Sci. Fenn. Math. 26 (2001), 233-248.
%\bibitem{BDLT} J. Bonet, P. Domanski, M. Lindstr\"om and J. Taskinen, \emph{Composition operators between weighted Banach spaces of analytic functions}, J. Austral. Math. Soc. (Series A) 64 (1998), 101-118.
\bibitem{12} F. Colonna and M. Tjani, \emph{Operator norms and essential norms of weighted composition operators between Banach spaces of analytic functions}, J. Math. Anal. Appl. 434 (2016), 93-124.
\bibitem{2} M.D. Contreras, J.A. Pel{\'a}ez, C. Pommerenke and J. R{\"a}tty{\"a}, \emph{Integral operators mapping into the space of bounded analytic functions}, J. Funct. Anal. 271 (2016), 2899-2943.
%\bibitem{3} J.B. Conway, \emph{A Course in Functional Analysis}, Second Edition, Springer-Verlag, 2007.
\bibitem{18} C. Cowen and B. MacCluer, \emph{Composition Operators on Spaces of Analytic Functions}, CRC Press, Boca Raton, 1995.
\bibitem{5} T. Eklund, P. Galindo, M. Lindstr\"om and I. Nieminen, \emph{Norm, essential norm and weak compactness of weighted composition operators between dual Banach spaces of analytic functions}, J. Math. Anal. Appl. 451 (2017), 1-13.
\bibitem{9} A. Harutyunyan and W. Lusky, \emph{On the boundedness of the differentiation operator between weighted spaces of holomorphic functions}, Studia Math. 184 (2008), 233-247.
\bibitem{17} W. Lusky, \emph{Growth conditions for harmonic and holomorphic functions},  Functional analysis (Trier, 1994), 281-291, de Gruyter, Berlin, 1996.
\bibitem{4} W. Lusky, \emph{On the isomorphism classes of weighted spaces of harmonic and holomorphic functions}, Studia Math. 175 (2006), 19-45.
\bibitem{8} R.E. Megginson, \emph{An introduction to Banach space theory}, Springer-Verlag, 1998.
\bibitem{6} K.F. Ng, \emph{On a theorem of Dixmier}, Math. Scand. 29 (1971), 279-280.
\bibitem{10} S. Ohno, K. Stroethoff and R. Zhao, \emph{Weighted composition operators between Bloch-type spaces}, Rocky Mountain J. Math. 33 (2003), 191-215.
\bibitem{28} Ch.  Pommerenke, \emph{Schlichte  Funktionen  und  analytische  Funktionen  von  beschr\"ankter  mittlerer Oszillation}, Comment. Math. Helv.52 (1977), 591-602. 
\bibitem{16} Ch. Pommerenke, \emph{Boundary behavior of conformal maps}, Springer-Verlag, 1992.
\bibitem{R} H.P. Rosenthal, \emph{On  relatively disjoint families  of measures, with some applications to Banach space theory}, Studia  Math.  37 (1971), 311-313.
\bibitem{19} J.H. Shapiro, \emph{Composition Operators and Classical Function Theory}, Springer, 1993.
\bibitem{13} W. Smith, D.M. Stolyarov and A. Volberg, \emph{Uniform approximation of Bloch functions and the boundedness of the integration operator on $H^{\infty}$}, Adv. Math. 314 (2017), 185-202. 
\bibitem{15} D.M. Stolyarov, \emph{A note on approximation of analytic Lipschitz functions on strips and semi-strips}, Preprint 2017.

\end{thebibliography}
\end{document}